\theoremstyle{plain}
\newtheorem{theorem}{Theorem}[section]
\newtheorem{lemma}[theorem]{Lemma}
\newtheorem{corollary}[theorem]{Corollary}
\newtheorem{proposition}[theorem]{Proposition}
\theoremstyle{definition}
\newtheorem{example}[theorem]{Example}
\theoremstyle{remark}
\newtheorem{remark}{Remark}
\newcommand{\R}{\mathbb{R}}
\newcommand{\F}{F_{xy}^{(n)}}
\newcommand{\dol}{F_{xy}}
\newcommand{\hor}{F_{xy}^\star}
\newcommand{\di}{F_{xy}^{(n)}}
\newcommand{\dt}{\mathrm{d}t}
\newcommand{\zlk}{\left(\frac{1-p}{p}\right)^k}
\begin{document}


\title{Distributional Chaos in Random Dynamical Systems}

\author{
\name{Jozef Kov\'a\v c\textsuperscript{a}\thanks{CONTACT Jozef Kov\'a\v c. Email: jozef.kovac@fmph.uniba.sk} and Katar\'ina Jankov\'a\textsuperscript{a}}
\affil{\textsuperscript{a}Department of Applied Mathematics and Statistics, Faculty of Mathematics, Physics and Informatics, Comenius University in Bratislava, Mlynsk\'a dolina, Bratislava, Slovakia}
}

\maketitle

\begin{abstract}
In this paper, we introduce the notion of distributional chaos and the measure of chaos for random dynamical systems generated by two interval maps. We give some sufficient conditions for a zero measure of chaos and examples of chaotic systems. We demonstrate that the chaoticity of the functions that generate a system does not, in general, affect the chaoticity of the system, i.e., a chaotic system can arise from two nonchaotic functions and vice versa. Finally, we show that distributional chaos for random dynamical system is, in some sense, unstable.
\end{abstract}

\begin{keywords}
Random dynamical systems; iterated function systems; distributional chaos; measure of chaos
\end{keywords}

\section{Introduction}

In 1986, the Royal Society in London held an international conference on chaos. At this conference, the following informal definition of chaos was proposed:
\begin{center}
\emph{Stochastic behaviour occurring in a deterministic system.}\cite{Stewart}
\end{center}
Describing chaos mathematically can be very difficult and potentially ambiguous. However, there are many definitions that have attempted to capture the notion of chaos \cite[see e.g.][]{Ruette}.

The notion of chaos for discrete dynamical systems was first used in 1975 in a paper by Li and Yorke \cite{LiYorke}. They said that for a map $f$ defined on a closed interval $I$, the dynamical system 
\begin{equation}\label{system}
x_{n+1}=f(x_n)
\end{equation}
is chaotic if there exists an uncountable set $S\subset I$ such that for every pair of distinct points $x_0,y_0$ in this set, we have
\begin{center}
$
\begin{matrix}
\displaystyle\liminf_{n\to\infty}|x_n-y_n|=0 & \text{and} & \displaystyle\limsup_{n\to\infty}|x_n-y_n|>0.
\end{matrix}
$
\end{center} 
It was later shown that, for interval maps, the existence of one pair with such a property is sufficient for Li-Yorke chaos \cite{Kuchta}. 

A possible generalization of Li and Yorke's chaos is so called distributional chaos \cite[see][]{Smital1,Smital2}. For a map $f$ defined on a closed interval $I$ and points $x$ and $y$ in this interval, consider a real function $\di$ given by
\begin{equation}
F_{xy}^{(n)}(t)=\frac{1}{n}\#\{i\in\{0,1,\ldots,n-1\}: |f^i(x)-f^i(y)|<t\}, 
\end{equation}
where $f^0$ is the identity and $f^{n+1}\equiv f\circ f^n$. $\di(t)$ can be viewed as the probability that the distance between $x_J$ and $y_J$ is less than $t$, where $J$ is a uniformly randomly chosen time from the set $\{0,1,\ldots,n-1\}$. The system~(\ref{system}) is distributionally chaotic if this probability does not stabilize for some $x,y\in I$ and $t\in(a,b)\subseteq\R$, i.e., if 
\begin{equation}
\liminf_{n\to\infty} F_{xy}^{(n)}(t)<\limsup_{n\to\infty}F_{xy}^{(n)}(t).
\end{equation}
The function $\displaystyle\liminf_{n\to\infty} F_{xy}^{(n)}$ (resp. $\displaystyle\limsup_{n\to\infty} F_{xy}^{(n)}$) is called the lower (resp. upper) distribution function and is denoted by $F_{xy}$ (resp. $\displaystyle F_{xy}^\star$). A specific feature of distributional chaos is that, unlike many other types of chaos, it can be quantified by the so called (principal) measure of chaos, $\mu$. It is given by 
\begin{equation}
\mu(f)=\sup_{x,y\in I}\frac{1}{|I|}\int_0^{|I|}\left(\hor(t)-\dol(t)\right)\dt,
\end{equation}
which is the size of the area between the lower and the upper distribution function. 

This paper focuses on distributional chaos in the random dynamical system 
\begin{equation}\label{system0}
x_{n+1}=
\begin{cases}
f(x_n) & \text{ with probability } p,\\
g(x_n) & \text{ with probability } 1-p, 
\end{cases}
\end{equation}
where $p\in[0,1]$ and $f,g$ are functions defined on a closed interval $I$. The advantage of distributional chaos is its probabilistic interpretation, which enables us to easily redefine its notion for random dynamical systems. 

However, does it actually make sense to consider chaos in random dynamical systems - in which there is always some stochasticity?  The answer is, in fact, yes. The above-mentioned definitions are focused on the distances between two trajectories and these can have some `organized behaviour' - even in random dynamical systems. For example, if $I=[0,1]$, $f(x)=\frac{1}{2}x$, and $g(x)=\frac{1}{2}x+\frac{1}{2}$, then for any  $x_0$ and $y_0$, we always have (regardless of the function selection) $|x_1-y_1|=\frac{1}{2}|x_0-y_0|$, $|x_2-y_2|=\frac{1}{4}|x_0-y_0|$, $|x_2-y_2|=\frac{1}{8}|x_3-y_3|$, and so on. In this case, we observe a sort of different phenomenon - deterministic behaviour occurring in a random system. 

The system~(\ref{system0}) is also a so called iterated function system (IFS) with probabilities \cite[see][]{Barnsley}. As far as we know, the literature mostly focuses on the invariant measures in such systems \cite[e.g. ][]{Bhat,Diaconis,Stenflo}, and results concerning chaos are not common. In \cite{Kifer}, topological entropy was studied, and recently, some other chaotic notions in IFS were investigated in \cite{Bahabadi} and \cite{Ghane} (but randomness was not taken into account in these studies). 

This paper is organized as follows. In Section~2, we define the trajectory of the system~(\ref{system0}) (following \cite{Bhat} and \cite{Kifer}). In Section~3, we introduce distributional chaos and its measure for the system~(\ref{system0}). Section~4 focuses on some sufficient conditions for zero measure of chaos. In Section~5, we give two examples of distributionally chaotic systems. Section~6 deals with the  stability of distributional chaos. 

\section{Random dynamical system}
Let $\Omega$ denote the set of all sequences of the functions $f$ and $g$ ($\Omega=\{f,g\}^\infty$) and let $\mathcal{S}$ be the power set (the set of all subsets) of $\Omega$. $\mathcal{S}$ is trivially a $\sigma$-algebra on $\Omega$; hence, $(\Omega,\mathcal{S})$ is a measurable space. Let $P:\mathcal{S}\to[0,1]$ denote the probability measure on this space generated by the finite dimensional probabilities
\begin{equation}
P(\{\omega=(\omega_1,\omega_2,\ldots): \omega_{i_1}=\varphi_1,\ldots, \omega_{i_n}=\varphi_n\})=p^{\sum_{k=1}^n I_f(\varphi_k)}(1-p)^{\sum_{k=1}^n I_g(\varphi_k)},
\end{equation}
where $n$ and $i_1<i_2<\ldots<i_n$ are positive integers, $\varphi_1,\ldots,\varphi_n \in\{f,g\}$, and 
\begin{equation}\label{indikator}
I_f(\varphi)=
\begin{cases}
1 & \text{ if } \varphi=f \\
0 & \text{ if } \varphi\neq f.
\end{cases}
\end{equation}

$I_g(\cdot)$ is defined analogously. The trajectory of $x\in I$ of the random dynamical system~(\ref{system0}) can then be expressed as the stochastic process $\{x_n\}_{n=1}^\infty$ defined on $(\Omega,\mathcal{S},P)$ by
\begin{equation}
x_n(\omega)\equiv \omega_n(x_{n-1}(\omega)),
\end{equation}
where $x_0(\omega)\equiv x$. Or equivalently,
\begin{equation}
x_n(\omega)=\omega_n\circ\omega_{n-1}\circ\ldots\circ\omega_1(x).
\end{equation}
Given $x_n$, the random variable $x_{n+1}$ does not depend on $x_{n-1},x_{n-2},\ldots$. Therefore, $\{x_n\}_{n=1}^\infty$ is a Markov process. 

\section{Definition of distributional chaos and its measure}
Recall that in deterministic dynamical systems, the definition of distributional chaos is based on the function $\di(t)$. The value of $\di(t)$ can be viewed as the probability that the distance between $x_J$ and $y_J$ is less than $t$, where $J$ is a random variable with the uniform distribution on the set $\{0,1,\ldots,n-1\}$. Using this `probabilistic interpretation' in the random dynamical system~(\ref{system0}), we can define the function $\di(t,f,g,p)$ by 
\begin{flalign}
F_{xy}^{(n)}(t,f,g,p)&=P(|x_J-y_J|<t)=\nonumber \\
&=P(|x_J-y_J|<t\ |\ J=0)P(J=0)+\nonumber \\
&\ \ +P(|x_J-y_J|<t\ |\ J=1)P(J=1)+\ldots+\nonumber \\
&\ \ +P(|x_J-y_J|<t\ |\ J=n-1)P(J=n-1)=\nonumber \\
&=\frac{1}{n}P(|x_0-y_0|)+\ldots+\frac{1}{n}P(|x_{n-1}-y_{n-1}|)=\nonumber \\
&=\frac{1}{n}(P(|x_0-y_0|<t)+\ldots+P(|x_{n-1}-y_{n-1}|<t)).
\end{flalign}
$\di(t,f,g,p)$ defined in this way is also the expected value of
\begin{equation}
\frac{1}{n}\#\{i\in\{0,1,\ldots,n-1\}: |x_i-y_i|<t\}
\end{equation}
(this term is a random variable in random dynamical systems). To demonstrate this, we write
\begin{equation*}
\frac{1}{n}\#\{i\in \{0,\ldots n-1\}:|x_i-y_i|<t\}=
\frac{1}{n}\left(I_0(t,f,g,p)+\ldots+I_{n-1}(t,f,g,p)\right),
\end{equation*}
where 
\begin{equation}
I_k(t,f,g,p)=
\begin{cases} 
1 & \text{if } |x_k-y_k|<t,  \\ 
0 & \text{if } |x_k-y_k|\ge t
\end{cases}
\end{equation}
for $k=0,1,\ldots,n-1$. The random variable $I_k(t,f,g,p)$ has a Bernoulli distribution. Therefore, 
\begin{equation}
E(I_k(t,f,g,p))=P(|x_k-y_k|<t).
\end{equation} 
Hence, 
\begin{align}
E\left(\frac{1}{n}\#\{i\in \{0,\ldots n-1\}:|x_i-y_i|<t\}\right)&=\nonumber \\
\frac{1}{n}(P(|x_0-y_0|<t)&+\ldots+P(|x_{n-1}-y_{n-1}|<t)).
\end{align} 
Given $\di(t,f,g,p)$, the lower and upper distribution functions can be defined in the same way as in the deterministic system, i.e., 
\begin{eqnarray}
\dol(t,f,g,p) &=& \liminf_{n\to\infty} \di(t,f,g,p),\\
\hor(t,f,g,p) &=& \limsup_{n\to\infty} \di(t,f,g,p).
\end{eqnarray}
We can also define the measure of chaos as 
\begin{equation}
\mu(f,g,p)=\sup_{x,y\in I}\frac{1}{|I|}\int_0^{|I|}(F_{xy}^\star(t,f,g,p)-F_{xy}(t,f,g,p))\mathrm{d}t.
\end{equation}
Given that for the functions $\dol(t,f,g,p)$ and $\hor(t,f,g,p)$ we clearly have 
\begin{equation}
0\le \dol(t,f,g,p)\le\hor(t,f,g,p)\le 1,
\end{equation} 
the measure $\mu(f,g,p)$ is always nonnegative. If the measure is positive, we say that the system~(\ref{system0}) is distributionally chaotic. 

\begin{remark}
For simplicity of notation, we use the same notation as in the deterministic system in the next sections, i.e., $\di(t), \dol(t)$, and $\hor(t)$. We also omit $p$ from $\mu(f,g,p)$.
\end{remark}

\section{Zero measure of distributional chaos}

In general, it can be very difficult to calculate the measure $\mu(f,g)$. However, in some cases, we are able show that this measure is zero. 

\begin{lemma}
If for every $t>0$ and every $x,y\in I$, the limit 
\begin{equation}
\lim_{n\to\infty} \F(t) = \lim_{n\to\infty} 
  \frac{P(|x_0-y_0|<t)+\ldots+P(|x_{n-1}-y_{n-1}|<t)}{n}
\end{equation}
exists, then $\mu(f,g)=0$.
\end{lemma}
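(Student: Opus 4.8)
The plan is to observe that the hypothesis forces the lower and upper distribution functions to coincide at every positive $t$, so that the integrand defining $\mu(f,g)$ vanishes almost everywhere. First I would invoke the elementary fact that a sequence of real numbers converges if and only if its lower and upper limits agree, in which case the common value is the limit. Applying this to the sequence $\{\F(t)\}_{n=1}^\infty$ for a fixed pair $x,y\in I$ and a fixed $t>0$, the assumed existence of $\lim_{n\to\infty}\F(t)$ gives
\begin{equation*}
\dol(t)=\liminf_{n\to\infty}\F(t)=\limsup_{n\to\infty}\F(t)=\hor(t),
\end{equation*}
so that $\hor(t)-\dol(t)=0$ for every $t>0$ and every $x,y\in I$.

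Next I would pass to the integral. For each fixed pair $x,y$ the integrand $\hor(t)-\dol(t)$ is thus zero at every $t>0$, that is, everywhere on $(0,|I|]$, and the only point not covered by the hypothesis is $t=0$. Since a single point has Lebesgue measure zero, it cannot contribute to the integral, and hence
\begin{equation*}
\frac{1}{|I|}\int_0^{|I|}\left(\hor(t)-\dol(t)\right)\dt=0
\end{equation*}
for every $x,y\in I$. Taking the supremum over all pairs $x,y\in I$ of a quantity that is identically zero then yields $\mu(f,g)=0$; the bounds $0\le\dol(t)\le\hor(t)\le 1$ recorded earlier ensure there is no sign subtlety.

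There is no genuinely hard step here: the argument is in essence the remark that convergence eliminates the gap between $\liminf$ and $\limsup$, after which the definition of $\mu(f,g)$ collapses. The only point deserving a word of care is the boundary value $t=0$, where the limit is not assumed to exist; but as it is a single point it is negligible for the integral, and in fact one checks directly that $\F(0)=0$ for all $n$ (no distance is strictly less than $0$), so the limit there exists trivially as well and the hypothesis could be stated for $t\ge 0$ without change.
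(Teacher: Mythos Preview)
Your proof is correct and takes essentially the same approach as the paper, which simply records ``Directly from the definition.'' You have merely spelled out the details---that existence of the limit forces $\dol(t)=\hor(t)$ and hence the integrand, integral, and supremum all vanish---which is exactly what the paper's one-line proof is pointing to.
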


\begin{proof}
Directly from the definition.
\end{proof}


\begin{corollary}\label{post1}
If for every $t>0$ and every $x,y\in I$, the limit
\begin{equation}
\lim_{n\to\infty} P(|x_n-y_n|<t)
\end{equation}
exists, then $\mu(f,g)=0$.
\end{corollary}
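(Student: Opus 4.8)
The plan is to deduce this directly from the preceding Lemma, the only gap being the passage from convergence of the individual probabilities to convergence of their running averages. Fix $t>0$ and $x,y\in I$, and set $a_k=P(|x_k-y_k|<t)$ for $k=0,1,2,\ldots$. By hypothesis the sequence $(a_k)$ converges; call its limit $L$ (and note $L\in[0,1]$). What the Lemma asks for is precisely that the Ces\`aro averages
\begin{equation*}
\F(t)=\frac{a_0+a_1+\cdots+a_{n-1}}{n}
\end{equation*}
have a limit, so it suffices to show $\F(t)\to L$.

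This is exactly the classical Ces\`aro mean theorem: if a sequence converges to $L$, then the sequence of its arithmetic means converges to the same limit $L$. I would recall the short $\varepsilon$-proof for completeness. Given $\varepsilon>0$, choose $N$ with $|a_k-L|<\varepsilon/2$ for all $k\ge N$; then split the average into the first $N$ terms and the remaining ones, bound the initial block by $\frac{1}{n}\sum_{k<N}|a_k-L|$, which tends to $0$ as $n\to\infty$, and bound the tail block by $\varepsilon/2$. For $n$ large enough the initial block is below $\varepsilon/2$ as well, giving $|\F(t)-L|<\varepsilon$. Hence $\lim_{n\to\infty}\F(t)=L$ exists.

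Since this reasoning applies for every $t>0$ and every $x,y\in I$, the hypothesis of the Lemma is satisfied, and we conclude $\mu(f,g)=0$. There is essentially no obstacle here: the substance is entirely contained in the Lemma, and the corollary merely records the elementary fact that pointwise convergence of $P(|x_n-y_n|<t)$ is a (strictly stronger) special case of convergence of the Ces\`aro averages $\F(t)$. The one point to keep straight is that the Ces\`aro argument is carried out separately for each fixed triple $(t,x,y)$; no uniformity in these parameters is required, because the supremum over $x,y$ and the integration over $t$ in the definition of $\mu(f,g)$ are already handled within the Lemma.
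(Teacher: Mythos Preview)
Your proposal is correct and matches the paper's approach exactly: the paper's proof is the single line ``Directly from the fact that the sequence of arithmetic means of a convergent sequence also converges,'' and you have simply spelled out that Ces\`aro argument in detail before invoking the preceding Lemma. There is nothing to add or change.
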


\begin{proof}
Directly from the fact that the sequence of arithmetic means of a convergent sequence also converges.
\end{proof}


\begin{proposition}
If the functions $f$ and $g$ are contractive, then $\mu(f,g)=0$.
\end{proposition}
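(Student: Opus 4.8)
The plan is to exploit the fact that contractivity is a \emph{pathwise} property: it constrains the dynamics along every individual realization $\omega$, not merely on average, so the randomness in the choice of map becomes irrelevant for the evolution of distances. Concretely, let $L_f<1$ and $L_g<1$ be contraction constants for $f$ and $g$ (so that $|f(a)-f(b)|\le L_f|a-b|$ and $|g(a)-g(b)|\le L_g|a-b|$ for all $a,b\in I$), and set $L=\max\{L_f,L_g\}<1$.

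First I would establish the uniform pathwise bound
\[
|x_n(\omega)-y_n(\omega)|\le L^n|x-y|
\]
for every $\omega\in\Omega$ and every $n\ge 0$, by induction on $n$. The base case $n=0$ is trivial, since $x_0(\omega)=x$ and $y_0(\omega)=y$. For the inductive step, regardless of whether $\omega_n=f$ or $\omega_n=g$, applying the relevant contraction constant gives $|x_n(\omega)-y_n(\omega)|=|\omega_n(x_{n-1}(\omega))-\omega_n(y_{n-1}(\omega))|\le L\,|x_{n-1}(\omega)-y_{n-1}(\omega)|$, and the claimed bound follows from the induction hypothesis. The key point is that the \emph{same} constant $L$ works for both branches, so the bound holds uniformly over the entire sample space $\Omega$.

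Next I would translate this into a statement about the probabilities $P(|x_n-y_n|<t)$. Fix $t>0$ and $x,y\in I$. Since $L<1$, there exists an $N$ (depending only on $t$ and $|x-y|$) such that $L^n|x-y|<t$ for all $n\ge N$. By the pathwise bound, this forces $|x_n(\omega)-y_n(\omega)|<t$ for \emph{every} $\omega\in\Omega$ whenever $n\ge N$, and hence $P(|x_n-y_n|<t)=1$ for all such $n$. In particular, $\lim_{n\to\infty}P(|x_n-y_n|<t)=1$ exists for every $t>0$ and every pair $x,y\in I$.

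Finally I would invoke Corollary~\ref{post1}: since the limit $\lim_{n\to\infty}P(|x_n-y_n|<t)$ exists for all $t>0$ and all $x,y\in I$, it follows at once that $\mu(f,g)=0$. I do not anticipate a genuine obstacle here; the only point requiring any care is the observation that contractivity holds along every realization with a common ratio $L$, which is precisely what makes the random choice of map irrelevant and reduces the argument to the deterministic intuition.
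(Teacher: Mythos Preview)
Your proposal is correct and follows essentially the same argument as the paper: take the maximum $L=\max(L_f,L_g)<1$ of the two contraction constants, obtain the pathwise bound $|x_n(\omega)-y_n(\omega)|\le L^n|x-y|$ for every $\omega$, deduce $P(|x_n-y_n|<t)\to 1$, and invoke Corollary~\ref{post1}. The only difference is cosmetic---you spell out the induction and the choice of $N$ a bit more explicitly than the paper does.
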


\begin{proof}
Take arbitrary $x,y\in I$. Given that $f$ and $g$ are contractive, there exist $c_1$, $c_2\in (0,1)$ such that $|f(x)-f(y)|\le c_1|x-y|$ and $|g(x)-g(y)|\le c_2|x-y|$ for every $x,y\in I$. Denote $c\equiv \max(c_1,c_2)$. Clearly, for every $\omega = (\omega_1,\omega_2,\ldots)\in \Omega$, $|x_1-y_1|=|\omega_1(x)-\omega_1(y)|\le c|x-y|$, and similarly, $|x_n-y_n|\le c^n|x-y|$. As $c<1$, this term tends to zero as $n$ tends to infinity. Consequently, $P(|x_n-y_n|<t)\to 1$ for every $t>0$. Hence, by Corollary~\ref{post1}, we have $\mu(f,g)=0$.
\end{proof}


\begin{theorem}\label{Lips}
Let the function $f$ be Lipschitz continuous (i.e. there is $M<\infty$ such that $|f(x)-f(y)|\le M|x-y|$ for every $x,y\in I$) and the function $g$ be contractive (i.e. there is $c<1$ such that $|g(x)-g(y)|\le c|x-y|$ for every $x,y\in I$). Next, suppose that $cM\ge 1$. Let $r$ be the smallest positive integer for which $c^rM\le 1$. If the probability $p$ of choosing the function $f$ in the $n$-th step is smaller than $\frac{1}{1+r}$, then $\mu(f,g)=0$.
\end{theorem}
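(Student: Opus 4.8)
The plan is to invoke Corollary~\ref{post1}: it suffices to prove that for every $t>0$ and every $x,y\in I$ the limit $\lim_{n\to\infty}P(|x_n-y_n|<t)$ exists. In fact I would show the stronger statement that $|x_n-y_n|\to 0$ almost surely, from which $P(|x_n-y_n|<t)\to 1$ for each fixed $t>0$, so the limit trivially exists. First I would track the distance $d_n=|x_n-y_n|$ along a fixed sample path $\omega=(\omega_1,\omega_2,\ldots)$. By the Lipschitz and contraction hypotheses, at the $k$-th step $d_k$ is multiplied by a factor at most $M$ if $\omega_k=f$ and at most $c$ if $\omega_k=g$, so iterating yields the deterministic bound
\[
d_n \le d_0\, M^{N_f(n)}\, c^{\,n-N_f(n)},
\]
where $N_f(n)=\#\{k\le n:\omega_k=f\}$ is the number of times $f$ is used in the first $n$ steps. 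The entire argument then reduces to showing that the exponential growth rate of the right-hand side is negative.

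Next I would take logarithms and normalise. Writing $a=\log M>0$ and $b=-\log c>0$ (both positive since $cM\ge 1$ and $c<1$ force $M\ge 1/c>1$), I get
\[
\frac{1}{n}\log\!\left(M^{N_f(n)}\,c^{\,n-N_f(n)}\right)=\frac{N_f(n)}{n}\,a-\Big(1-\frac{N_f(n)}{n}\Big)b.
\]
Because the functions are chosen independently with probability $p$ at each step, $N_f(n)$ is a sum of i.i.d.\ Bernoulli$(p)$ random variables, so by the strong law of large numbers $N_f(n)/n\to p$ almost surely. Hence the displayed average converges almost surely to $pa-(1-p)b$, and once this limit is shown to be strictly negative it follows that $M^{N_f(n)}c^{\,n-N_f(n)}\to 0$ and therefore $d_n\to 0$ almost surely.

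The heart of the proof is thus the purely arithmetic verification that the hypothesis $p<\frac{1}{1+r}$ forces $pa-(1-p)b<0$, equivalently $p<\frac{b}{a+b}$. By the definition of $r$ as the smallest positive integer with $c^rM\le 1$, i.e.\ with $a\le rb$, one has $\frac{1}{1+r}\le\frac{b}{a+b}$ — cross-multiplying by $(1+r)(a+b)>0$, this inequality is exactly $a\le rb$. Combined with $p<\frac{1}{1+r}$ this gives $p<\frac{b}{a+b}$, as required. I expect this comparison between the threshold $\frac{1}{1+r}$ of the statement and the sharp value $\frac{b}{a+b}=\frac{\log(1/c)}{\log(M/c)}$ to be the main (though brief) obstacle, since it is precisely where the integer $r$ is translated into the exact Lyapunov-exponent condition $M^p c^{1-p}<1$.

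Finally, having established $d_n\to 0$ almost surely, I would observe that for each fixed $t>0$ the indicator $\mathbf{1}[\,|x_n-y_n|<t\,]\to 1$ almost surely, so by dominated convergence $P(|x_n-y_n|<t)\to 1$. The limit therefore exists for every $t>0$ and every $x,y\in I$, and Corollary~\ref{post1} yields $\mu(f,g)=0$.
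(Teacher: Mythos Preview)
Your proof is correct and follows essentially the same line as the paper's: both bound $|x_n-y_n|$ by $M^{X_n}c^{\,n-X_n}$ times the initial distance, note that $X_n=N_f(n)$ is binomial, and use a law-of-large-numbers argument to conclude $P(|x_n-y_n|<t)\to 1$ and hence $\mu(f,g)=0$ via Corollary~\ref{post1}. The only difference is cosmetic: the paper applies Chebyshev's inequality directly to the event $\{X_n\le (n-k)/(1+r)\}$, whereas you invoke the strong law and the Lyapunov exponent $p\log M+(1-p)\log c$, which incidentally exhibits the sharp threshold $p<\log(1/c)/\log(M/c)$ of which the hypothesis $p<1/(1+r)$ is the integer-discretized sufficient form.
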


Before proving Theorem~\ref{Lips}, we need the following lemma. 

\begin{lemma}\label{binom}
Let $X_n$ have a binomial distribution with parameters $n$ and $p\in(0,1)$, and let $a$ and $b$ be arbitrary real numbers. Then 
\begin{enumerate}
\item[(i)] if $a>p$, $\lim_{n\to\infty} P(X_n\ge an+b)=0$ and
\item[(ii)] if $0<a<p$, then $\lim_{n\to\infty} P(X_n\ge an+b)=1$.
\end{enumerate}
\end{lemma}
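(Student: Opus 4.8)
The plan is to prove Lemma~\ref{binom} using the standard large-deviation behavior of the binomial distribution, exploiting that $X_n$ is a sum of $n$ i.i.d.\ Bernoulli$(p)$ random variables with mean $np$. The key observation is that the event $\{X_n \ge an+b\}$ concerns whether the empirical average $X_n/n$ exceeds a threshold that tends to $a$, and the strong (or weak) law of large numbers forces $X_n/n \to p$. So for part (i), when $a>p$, the threshold $a$ sits strictly above the limiting mean $p$, and for part (ii), when $a<p$, it sits strictly below; in each case we expect the probability of being on the ``wrong'' side to vanish or tend to one.

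First I would rewrite the event in normalized form. For part (i) I would observe that for large $n$ the quantity $an+b$ exceeds $(p+\varepsilon)n$ for a suitably small fixed $\varepsilon>0$ (chosen with $p+\varepsilon < a$, possible since $a>p$), because the additive constant $b$ becomes negligible relative to the linear term. Thus it suffices to bound $P(X_n \ge (p+\varepsilon)n)$, which is $P\!\left(\frac{X_n}{n} - p \ge \varepsilon\right)$. Then I would invoke either Chebyshev's inequality (using $\mathrm{Var}(X_n)=np(1-p)$, giving a bound of order $1/n \to 0$) or, more cleanly, the weak law of large numbers, to conclude the probability tends to $0$. Part (ii) is the complementary statement: for $0<a<p$ choose $\varepsilon>0$ with $a+\varepsilon<p$, note that eventually $an+b \le (p-\varepsilon)n$, and write
\begin{equation}
P(X_n \ge an+b) \ge P(X_n \ge (p-\varepsilon)n) = 1 - P\!\left(\frac{X_n}{n} < p-\varepsilon\right),
\end{equation}
where the subtracted probability is $P\!\left(p - \frac{X_n}{n} > \varepsilon\right)$, again tending to $0$ by the same argument.

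The only mild subtlety — and what I expect to be the main thing to handle carefully rather than a genuine obstacle — is the bookkeeping with the additive constant $b$, which may be negative or positive and is arbitrary. The point is simply that $an+b$ and $an$ have the same leading asymptotics, so one must phrase the argument as ``for all $n$ sufficiently large'' after absorbing $b$ into the $\varepsilon$-slack. I would make this precise by noting that $an+b \ge (p+\varepsilon)n$ holds once $n \ge \frac{-b}{a-p-\varepsilon}$ in part (i), and the analogous bound in part (ii); since we are taking a limit in $n$, finitely many small $n$ are irrelevant. With that in place, the whole lemma reduces to the elementary fact that $X_n/n \to p$ in probability, which is immediate from Chebyshev. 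I would present the Chebyshev route rather than the law of large numbers, since it is self-contained and gives the vanishing rate explicitly, keeping the proof short.
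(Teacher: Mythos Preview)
Your proposal is correct and follows essentially the same route as the paper: both arguments reduce to Chebyshev's inequality for $X_n - np$ (equivalently, the weak law of large numbers for $X_n/n$). The only cosmetic difference is that the paper applies Chebyshev directly with deviation threshold $(a-p)n+b$, whereas you first absorb the constant $b$ into an $\varepsilon$-slack before applying the bound.
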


\begin{proof}
Recall that the expected value of $X_n$ is $np$ and the variance of $X_n$ is $np(1-p)$. Using Chebyshev's inequality, we have 
\begin{enumerate}
\item[(i)]
$
\begin{aligned}[t]
P(X_n\ge an+b)&=P(X_n-np\ge (a-p)n+b)\le \\ 
&\le P(|X_n-np|\ge (a-p)n+b)\le \frac{np}{((a-p)n+b)^2} \to 0
\end{aligned}
$\\
as $n\to\infty$, and
\item[(ii)]
$
\begin{aligned}[t]
P(X_n< an+b)&=P(X_n-np < (a-p)n+b) = \\ 
& = P(np-X_n > (p-a)n-b)\le \\
& \le P(|np-X_n|>(p-a)n-b)\le \\
& \le P(|X_n-np|\ge (p-a)n-b)\le \frac{np}{((p-a)n-b)^2}\to 0,
\end{aligned}
$\\
as $n\to \infty$; hence, $P(X_n\ge an+b)\to 1$.
\end{enumerate}
\end{proof}

\begin{proof}[Proof of Theorem~\ref{Lips}]
Take arbitrary $x,y\in I$ and $t>0$. For fixed $t$, there is an integer $k$ such that $c^k|I|<t$; hence, for any positive integer $m$, 
\begin{equation}
c^k(c^rM)^m|I|<t.
\end{equation}
Let $X_n$ be a random variable representing the number of times we applied the function $f$ in the first $n$ steps. More formally, for $\omega=(\omega_1,\omega_2,\ldots)\in \Omega$, we set
\begin{equation}
X_n(\omega)=\sum_{i=1}^n I_f(\omega_i),
\end{equation}
where $I_f$ was defined in (\ref{indikator}). Next, the properties of the functions $f$ and $g$ imply that 
\begin{equation}
|x_n-y_n|\le c^{n-X_n}M^{X_n}|x-y|\le c^{n-X_n}M^{X_n}|I|.
\end{equation}
Hence, if $n-X_n\ge rX_n+k$, then $|x_n-y_n|<t$. In the worst case ($n-X_n=rX_n+k$), we have
\begin{equation}
|x_n-y_n|\le c^{rX_n+k}M^{X_n}|I|=c^k(c^r)^{X_n}M^{X_n}|I|=c^k(c^rM)^{X_n}|I|<t.
\end{equation} 
Clearly, $X$ has a binomial distribution with parameters $n$ and $p$, and using Lemma~\ref{binom}, we obtain
\begin{eqnarray*}
P(|x_n-y_n|<t)&\ge& P(n-X_n \ge rX_n+k) = P(rX_n+X_n\le n-k) =\\
&=& P\left(X_n\le \frac{1}{1+r}n-\frac{k}{1+r}\right) 
= 1-P\left(X_n>\frac{1}{1+r}n-\frac{k}{1+r}\right)\ge \\
&\ge& 1-P\left(X_n\ge\frac{1}{1+r}n-\frac{k}{1+r}\right) \to 1,
\end{eqnarray*}
because $\frac{1}{1+r}>p$. Therefore, $P(|x_n-y_n|<t)\to 1$ for every $x,y\in I$ and every $t>0$, and by Corollary~\ref{post1}, $\mu(f,g)=0$.
\end{proof}


\begin{theorem}
Let the function $f$ be Lipschitz continuous, the function $g$ be contractive (with the same constants $M$ and $c$ as in the Theorem~(\ref{Lips}) and let $cM\le 1$. Let $r$ be the greatest positive integer for which $cM^r\le 1$. If the probability $p$ is smaller than $\frac{r}{1+r}$, then $\mu(f,g)=0$. 
\end{theorem}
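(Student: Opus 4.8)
The plan is to mirror the proof of Theorem~\ref{Lips}, interchanging the roles played by the contraction and the expansion. As there, I would aim to show that $P(|x_n-y_n|<t)\to 1$ for every $x,y\in I$ and every $t>0$, and then invoke Corollary~\ref{post1} to conclude $\mu(f,g)=0$. First I would fix $x,y\in I$ and $t>0$ and, since $c<1$, choose a nonnegative integer $k$ with $c^k|I|<t$. Writing $X_n(\omega)=\sum_{i=1}^n I_f(\omega_i)$ for the number of applications of $f$ in the first $n$ steps (so $n-X_n$ counts the applications of $g$), the Lipschitz and contraction hypotheses give, exactly as before,
\[
|x_n-y_n|\le c^{\,n-X_n}M^{X_n}|I|.
\]

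The combinatorial heart of the argument is the worst-case bookkeeping, where the inequality $cM^r\le 1$ now says that a single application of $g$ offsets $r$ applications of $f$ (the reverse of the situation in Theorem~\ref{Lips}, where $r$ applications of $g$ were needed to offset one of $f$). Concretely, whenever $X_n\le r\bigl((n-X_n)-k\bigr)$, regrouping the factors gives
\[
c^{\,n-X_n}M^{X_n}|I|=(cM^r)^{(n-X_n)-k}\cdot M^{\,X_n-r((n-X_n)-k)}\cdot c^k|I|\le c^k|I|<t,
\]
since $cM^r\le 1$ and the middle factor is at most $1$ under the assumed inequality; note that $X_n\ge 0$ forces $(n-X_n)-k\ge 0$, so the exponents are legitimate. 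Rearranging the defining inequality yields the equivalent linear condition $X_n\le \frac{r}{1+r}\,n-\frac{rk}{1+r}$.

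Consequently $P(|x_n-y_n|<t)\ge P\bigl(X_n\le \frac{r}{1+r}n-\frac{rk}{1+r}\bigr)$. Since $X_n$ is binomial with parameters $n$ and $p$, I would bound this from below by $1-P\bigl(X_n\ge \frac{r}{1+r}n-\frac{rk}{1+r}\bigr)$ and apply Lemma~\ref{binom}(i) with $a=\frac{r}{1+r}$ and $b=-\frac{rk}{1+r}$: the hypothesis $p<\frac{r}{1+r}$ is precisely the condition $a>p$, so this probability tends to $0$ and hence $P(|x_n-y_n|<t)\to 1$. Corollary~\ref{post1} then gives $\mu(f,g)=0$.

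The only genuinely new point compared with Theorem~\ref{Lips} is getting the grouping right so that the threshold comes out as $\frac{r}{1+r}$ rather than the sharper but less convenient ratio $\frac{-\ln c}{\ln M-\ln c}$; since $r=\lfloor -\ln c/\ln M\rfloor\le \frac{-\ln c}{\ln M}$, the integer $r$ only furnishes a sufficient (conservative) condition, and verifying $\frac{r}{1+r}\le\frac{-\ln c}{\ln M-\ln c}$ is where I would be most careful. I should also record at the outset that the statement tacitly requires $M>1$: otherwise $f$ is itself contractive, $cM^r\le 1$ holds for every $r$ so no greatest such $r$ exists, and in any case the Proposition already yields $\mu(f,g)=0$.
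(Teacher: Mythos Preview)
Your proposal is correct and is precisely the analog of the proof of Theorem~\ref{Lips} that the paper has in mind when it writes ``The proof is analogous to the previous one.'' Your regrouping $c^{n-X_n}M^{X_n}=(cM^r)^{(n-X_n)-k}\cdot M^{X_n-r((n-X_n)-k)}\cdot c^k$ is the exact counterpart of the paper's $c^k(c^rM)^{X_n}$ factorisation, and the application of Lemma~\ref{binom}(i) with $a=\frac{r}{1+r}$ goes through just as you describe; your side remarks on the implicit assumption $M>1$ and on the sharper logarithmic threshold are correct extras that the paper does not mention.
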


The proof is analogous to the previous one. 

\begin{theorem}\label{Finite}
Let $x_n$ converge to a finite set $A\equiv\{a_1,\ldots,a_m\}$ for any $x\in I$ in such sense that
\begin{equation}
\lim_{n\to\infty} P(x_n\in A)=1.
\end{equation}
Then $\mu(f,g)=0$.
\end{theorem}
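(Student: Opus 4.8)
The plan is to apply the first lemma of this section rather than Corollary~\ref{post1}, because here the probabilities $P(|x_n-y_n|<t)$ need \emph{not} converge: the two trajectories may rotate periodically through the points of $A$ (for instance if $f=g$ has an attracting cycle, so that $|x_n-y_n|$ is eventually periodic), and only their time averages stabilise. Accordingly I would prove that for every $x,y\in I$ the Cesàro averages $\F(t)=\frac1n\sum_{i=0}^{n-1}P(|x_i-y_i|<t)$ converge for every $t$ outside the finite set $D=\{|a_i-a_j|:1\le i,j\le m\}$ of pairwise distances. Since $D$ is finite, this gives $\hor(t)=\dol(t)$ for almost every $t$, the integrand in $\mu(f,g)$ then vanishes a.e., and $\mu(f,g)=0$ follows exactly as in the first lemma.

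First I would pass to the joint process. The pair $(x_n,y_n)$ is a Markov chain on $I\times I$ driven by the common random sequence $\omega$, and the hypothesis gives $P((x_n,y_n)\in A\times A)\ge 1-P(x_n\notin A)-P(y_n\notin A)\to 1$. For a fixed $t\notin D$ I would split
\[
P(|x_n-y_n|<t)=\sum_{(a,b)\in A\times A,\ |a-b|<t}P(x_n=a,y_n=b)+R_n(t),
\]
where $0\le R_n(t)\le P((x_n,y_n)\notin A\times A)\to 0$. A Cesàro average of a null sequence is null, so the $R_n(t)$ disappear in the limit, and the problem reduces to showing that the time averages of the joint occupation probabilities $\pi_n(a,b)=P(x_n=a,y_n=b)$ converge for each $(a,b)\in A\times A$.

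For this I would write the evolution of the finite vector $\pi_n$ (restricted to $A\times A$) in variation-of-constants form. Conditioning on one step gives $\pi_{n+1}=\pi_n Q+r_n$, where $Q$ is the fixed substochastic matrix on $A\times A$ with entries $Q((a,b),(a',b'))=p\,\mathbf 1[f(a)=a',\,f(b)=b']+(1-p)\,\mathbf 1[g(a)=a',\,g(b)=b']$, and $r_n\ge 0$ is the mass entering $A\times A$ from its complement, with $\|r_n\|_1\le P((x_n,y_n)\notin A\times A)\to 0$. Because $Q$ is substochastic it is power bounded, so its Cesàro powers $\frac1n\sum_{i<n}Q^i$ converge to the ergodic projection $\bar Q$, and the deviations $\sum_{i<\ell}Q^i-\ell\bar Q$ stay bounded (the unimodular eigenvalues are semisimple). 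Substituting $\pi_n=\pi_0Q^n+\sum_{j<n}r_jQ^{\,n-1-j}$ and averaging over $n$, the homogeneous part converges to $\pi_0\bar Q$, and the bounded-deviation piece of the inhomogeneous part is Cesàro-negligible because $\|r_n\|_1\to 0$.

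The main obstacle is exactly the surviving part of the inhomogeneous term, which stems from $A$ not being assumed forward invariant: mass can leak out of $A\times A$ and later return. Although $\|r_n\|_1\to 0$, this leakage need not be summable, since the constraints $\|\pi_n\|_1\to 1$ and $\|r_n\|_1\le P((x_n,y_n)\notin A\times A)$ still permit $\|r_n\|_1$ of order $1/n$. The leftover piece $\bigl(\frac1n\sum_{j<n}(n-1-j)\,r_j\bigr)\bar Q$ is then not controlled by the size of $\|r_n\|_1$ alone: a naive norm bound only yields $\log n$ growth. Resolving this is the crux: one must exploit a cancellation coming from the IFS structure, or, more cleanly, first replace $A$ by a forward-invariant set carrying all the asymptotic mass so that the terms $r_n$ vanish identically and the finite-chain ergodic theorem applies directly. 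Establishing such an effectively invariant finite support is where I expect the real work to lie.
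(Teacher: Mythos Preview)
Your route is genuinely different from the paper's, and you have correctly located the hard point, but you leave it unresolved. The paper sidesteps your variation-of-constants analysis entirely: it does \emph{not} try to prove that the Ces\`aro averages converge. Instead it fixes $\delta>0$, chooses $n_0$ with $P(x_{n_0}\in A,\ y_{n_0}\in A)>1-\delta$, and conditions on the $2^{n_0}$ possible initial words $\omega_1\cdots\omega_{n_0}$. For the ``good'' words (those sending $(x,y)$ into $A\times A$) the conditional process from time $n_0$ onward is, by the Markov property, the joint process started at a pair $(x',y')\in A\times A$; the paper's Lemma~\ref{MR2} (a direct consequence of the finite-state Ces\`aro ergodic theorem, Lemma~\ref{MR1}) gives that $\limsup$ and $\liminf$ of the conditional averages coincide. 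The ``bad'' words together have probability at most $\delta$ and contribute at most $\delta$ to $\hor(t)-\dol(t)$. Since $\delta$ is arbitrary, $\hor(t)=\dol(t)$ for \emph{every} $t>0$. This is cleaner than tracking the inhomogeneous terms $r_n$ you introduce.

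That said, the obstacle you flag --- that $A$ is not assumed forward invariant --- is real, and the paper's proof silently relies on it: Lemma~\ref{MR2} builds a Markov chain on the state space $\{d_{ij}:1\le i,j\le m\}$, which is only well defined if $f(A)\cup g(A)\subseteq A$. The paper's sole use of Theorem~\ref{Finite} (in Theorem~\ref{unst}) constructs $A$ to be invariant, so the gap is harmless in practice, but you have spotted a genuine omission in the stated generality. Your instinct to first pass to a forward-invariant finite support is exactly what is needed to make either argument rigorous; without it, the matrix-analytic bound you attempt does, as you note, fail to close.
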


For the sake of simplicity, we will first formulate and prove two lemmas.

\begin{lemma}\label{MR1}
Let $\{Z_n\}_{n=0}^\infty$ be a Markov chain with a finite state space $S$. Then for every $i,j\in S$, the limit
\begin{equation}\label{limita}
\lim_{n\to\infty}\frac{1}{n}\sum_{k=0}^n P(Z_k=j|Z_0=i)
\end{equation}
exists.
\end{lemma}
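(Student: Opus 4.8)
The plan is to recast the statement in linear-algebraic terms and prove it by a mean-ergodic (spectral) argument. Let $P=(p_{ij})$ be the one-step transition matrix of the chain, so that $P(Z_k=j\mid Z_0=i)=(P^k)_{ij}$, and the quantity whose limit we seek is the $(i,j)$ entry of the Cesàro average $A_n=\frac{1}{n}\sum_{k=0}^{n}P^k$. Since all entries of every $P^k$ lie in $[0,1]$, the difference between $A_n$ and $B_n=\frac{1}{n}\sum_{k=0}^{n-1}P^k$ is $\frac{1}{n}P^n$, whose entries are $O(1/n)$; hence it is enough to prove that $B_n$ converges entrywise, and the existence of the stated limit follows.

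First I would pass to the Jordan canonical form $P=SJS^{-1}$ over $\mathbb{C}$, which gives $B_n=S\bigl(\frac{1}{n}\sum_{k=0}^{n-1}J^k\bigr)S^{-1}$, so it suffices to show that the Cesàro average of $J^k$ converges on each Jordan block. Because $P$ is stochastic, every eigenvalue $\lambda$ satisfies $|\lambda|\le 1$, and the row sums being $1$ together with the nonnegativity of the entries give $\|P^k\|_\infty=1$ for all $k$, so the powers of $P$ are uniformly bounded.

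The crux is to use this uniform boundedness to control the blocks with $|\lambda|=1$. A Jordan block of size $\ge 2$ attached to an eigenvalue of modulus $1$ would make $\|J^k\|$ grow polynomially in $k$, contradicting $\sup_k\|P^k\|<\infty$; hence every such block is $1\times 1$, i.e. all unimodular eigenvalues are semisimple. This is the step I expect to be the main obstacle, since it is precisely where stochasticity is indispensable: for a general matrix the Cesàro averages of the powers need not converge (e.g. a size-$2$ Jordan block at $\lambda=1$ gives off-diagonal averages $\frac{1}{n}\sum_{k=0}^{n-1}k\to\infty$).

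With semisimplicity in hand the remaining case analysis on a single block is routine: if $|\lambda|<1$ then $J^k\to 0$ geometrically and its Cesàro average tends to $0$; if $|\lambda|=1$ but $\lambda\ne 1$ the block is the scalar $\lambda$ and $\frac{1}{n}\sum_{k=0}^{n-1}\lambda^k=\frac{1}{n}\cdot\frac{1-\lambda^n}{1-\lambda}\to 0$; and if $\lambda=1$ the block is the scalar $1$, whose Cesàro average equals $1$ for every $n$. Summing the block contributions shows $B_n$ converges, hence so does $A_n$, and in particular every entry $\frac{1}{n}\sum_{k=0}^{n}P(Z_k=j\mid Z_0=i)$ has a limit. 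Alternatively, one may argue purely probabilistically by decomposing $S$ into transient states and recurrent communicating classes: the Cesàro averages vanish for transient $j$ because $\sum_k p_{ij}^{(k)}<\infty$, and they converge to the stationary value weighted by the absorption probability when $j$ is recurrent, irrespective of periodicity.
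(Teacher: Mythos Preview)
Your spectral/Jordan-form argument is correct and complete: the key step---that every unimodular eigenvalue of a stochastic matrix is semisimple---follows exactly as you say from the uniform boundedness $\sup_k\|P^k\|_\infty=1$ together with $\|J^k\|\le\|S^{-1}\|\,\|P^k\|\,\|S\|$, and the block-by-block Ces\`aro analysis is routine. The paper, however, takes precisely the probabilistic route you sketch in your final sentence: it decomposes the state space as $S=C_1\cup\cdots\cup C_s\cup T$ with each $C_l$ a closed irreducible class and $T$ the set of transient states, cites the classical convergence of Ces\`aro averages to the unique stationary distribution on each irreducible piece, and then records the limit case by case as $0$ (when $j$ is transient or lies in a class not containing $i$), $\Pi_j(C_l)$ (when $i,j\in C_l$), or $\Pi_j(C_l)\cdot\nu(C_l)$ (when $i\in T$ and $j\in C_l$, with $\nu(C_l)$ the absorption probability into $C_l$). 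Your linear-algebraic proof is more self-contained and in fact proves the stronger statement that Ces\`aro averages of the powers of \emph{any} power-bounded matrix converge; the paper's argument is shorter once the irreducible case is granted and has the side benefit of identifying the limit explicitly, though that is not actually needed for the application to Lemma~\ref{MR2}.
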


\begin{proof}
If this chain is irreducible, then the limit exists and converges to a unique stationary distribution (see e.g. \cite{Bhat}). Now, let this chain be reducible, so that $S=C_1\cup\ldots\cup C_s\cup T$, where $C_1,\ldots, C_s$ are closed subsets of $S$ such that the chain restricted to $C_l$, $l=1,\ldots,s$ is irreducible and $T$ is the set of all transient states. Let $\Pi(C_l)$ denote the unique stationary distribution on $C_l$, $l=1,\ldots,s$. If
\begin{enumerate}
\item $i\in C_l$, then
\begin{itemize}
\item if $j\in C_l$, then the limit in~(\ref{limita}) is equal to $\Pi_j(C_l)$,
\item if $j\notin C_l$, then the limit in~(\ref{limita}) is clearly equal to 0,
\end{itemize}
\item $i\in T$, i.e.,  $i$ is transient, then
\begin{itemize}
\item if $j$ is also transient, then the limit in~(\ref{limita}) is clearly 0,
\item if $j\in C_l$, then the limit in~(\ref{limita}) is equal to $\Pi_j(C_l)\cdot\nu(C_l)$, where $\nu(C_l)$ is the conditional probability of hitting the set $C_l$ if $Z_0=i$.   
\end{itemize}
\end{enumerate}
This list covers all possibilities. 
\end{proof}

\begin{lemma}\label{MR2}
If $x,y\in A$, then for any $t>0$, the limit
\begin{equation}\label{limita2}
\lim_{n\to\infty}\frac{1}{n}\sum_{i=0}^{n-1} P(|x_i-y_i|<t)
\end{equation}
exists.
\end{lemma}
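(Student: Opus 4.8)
The plan is to recognize the coupled pair $(x_n,y_n)$ --- both trajectories driven by the \emph{same} random sequence $\omega$ --- as a Markov chain on a finite state space, and then to reduce the claim to Lemma~\ref{MR1}. First I would introduce the joint process $Z_n=(x_n,y_n)$. Since $x_{n+1}=\omega_{n+1}(x_n)$ and $y_{n+1}=\omega_{n+1}(y_n)$ are obtained by applying the \emph{same} map $\omega_{n+1}$, the transition from $Z_n$ to $Z_{n+1}$ is: with probability $p$ we get $(f(x_n),f(y_n))$, and with probability $1-p$ we get $(g(x_n),g(y_n))$. This depends only on $Z_n$ and an independent coin flip, so $\{Z_n\}$ is a Markov chain. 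Because $x,y\in A$ and $A$ is forward-invariant under both $f$ and $g$ (so that $f(A),g(A)\subseteq A$), every $Z_n$ stays in $A\times A$, a finite set of cardinality $m^2$. Hence $\{Z_n\}$ is a finite-state Markov chain with the deterministic initial state $Z_0=(x,y)$.

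Next I would rewrite the quantity of interest in terms of this chain. For fixed $t>0$, set $B_t=\{(a,b)\in A\times A:\ |a-b|<t\}$, a finite subset of the state space. Then $P(|x_i-y_i|<t)=\sum_{(a,b)\in B_t}P(Z_i=(a,b))$, and since $B_t$ is finite we may interchange the sum over states with the Cesàro average:
\[
\frac{1}{n}\sum_{i=0}^{n-1}P(|x_i-y_i|<t)=\sum_{(a,b)\in B_t}\frac{1}{n}\sum_{i=0}^{n-1}P\bigl(Z_i=(a,b)\mid Z_0=(x,y)\bigr).
\]

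Finally I would apply Lemma~\ref{MR1} to each summand: it guarantees that every Cesàro average $\frac{1}{n}\sum_i P(Z_i=(a,b)\mid Z_0=(x,y))$ converges. A finite sum of convergent sequences converges, so the left-hand side converges, which is exactly the assertion. The harmless mismatch between the index range $\sum_{i=0}^{n-1}$ used here and the range $\sum_{k=0}^{n}$ in Lemma~\ref{MR1} does not affect the limit, since adding or dropping one boundary term and rescaling by a factor $n/(n+1)$ alters the average only by a quantity of order $1/n$.

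I expect the main obstacle to be the finiteness of the state space rather than the Markov property. The argument above goes through verbatim only once we know the coupled orbit of $(x_n,y_n)$ cannot escape the finite set $A\times A$, i.e.\ that $A$ is invariant under $f$ and $g$. If such invariance is not already built into the standing hypotheses of Theorem~\ref{Finite}, then the delicate point is to confirm that starting from $x,y\in A$ the process $\{Z_n\}$ is confined to a finite set, so that Lemma~\ref{MR1} applies; once that is secured, the rest is bookkeeping.
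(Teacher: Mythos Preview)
Your proposal is correct and mirrors the paper's own proof essentially line for line: the paper also introduces the coupled chain $Z_n$ on the state space $\{d_{ij}:i,j=1,\dots,m\}\cong A\times A$, rewrites $P(|x_i-y_i|<t)$ as a finite sum of transition probabilities over the set $K=\{(i,j):|a_i-a_j|<t\}$, and then invokes Lemma~\ref{MR1} termwise. Your caveat about the forward invariance of $A$ under $f$ and $g$ is well taken; the paper tacitly assumes it (otherwise the chain $Z_n$ would not have the finite state space $\{d_{ij}\}$), so your reasoning is at least as complete as the original.
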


\begin{proof}
Consider a Markov chain $\{Z_n\}_{n=0}^\infty$ with states $d_{ij}$, where $i,j=1,\ldots,m$, given in such way that $Z_n$ is in the state $d_{ij}$ if and only if $x_n=a_i$ and $y_n=a_j$. Without loss of generality, suppose that $Z_0=d_{12}$. Let $K$ be the set given by 
\begin{equation}
K\equiv \{(i,j)\in\{1,\ldots,m\}^2:|a_i-a_j|<t\}.
\end{equation}
It can be seen that $|x_n-y_n|<t$ if and only if
\begin{equation}
Z_n\in\{d_{ij}, (i,j)\in K\}.
\end{equation}
It follows that 
\begin{eqnarray*}
\lim_{n\to\infty}\frac{1}{n}\sum_{k=0}^{n-1} P(|x_k-y_k|<t)&=&
\lim_{n\to\infty}\frac{1}{n}\sum_{k=0}^{n-1} \sum_{(i,j)\in K} P(Z_k = d_{ij}|Z_0=d_{12})=\\
&=&\sum_{(i,j)\in K} \lim_{n\to\infty}\frac{1}{n}\sum_{k=0}^{n-1} P(Z_i = d_{ij}|Z_0=d_{12}),
\end{eqnarray*}
which is a finite sum of existing limits (from Lemma~\ref{MR1}). Hence, the limit in~(\ref{limita2}) exists.
\end{proof}

\begin{proof}[Proof of Theorem~\ref{Finite}]
Let $x,y\in I$, $t>0$, and $\delta>0$ be arbitrary. We will show that $\hor(t)-\dol(t)\le \delta$. 

From our assumptions, for the given $\delta$, there exists $n_0$ such that
\begin{equation}\label{ineq}
P(x_{n_0}\in A \wedge y_{n_0}\in A)>1-\delta.
\end{equation}
By the time of $n_0$, there are only finitely many ($2^{n_0}$) possible scenarios (e.g., if $n_0=3$, then the possible scenarios are $fff,ffg,fgf,gff,fgg,gfg,ggf,$ and $ggg$). These scenarios can be expressed by the sets 
\begin{equation}
B_{\varphi_1\varphi_2\ldots\varphi_{n_0}}\equiv\{\omega=(\omega_1,\omega_2,\ldots)\in\Omega:\omega_1=\varphi_1,\omega_2=\varphi_2,\ldots,\omega_{n_0}=\varphi_{n_0}\},
\end{equation}
where $\varphi_1,\ldots,\varphi_{n_0}\in\{f,g\}$.

For notational simplicity, we denote these sets by $C_1,\ldots,C_{2^{n_0}}$ and sort them so that
\begin{itemize}
\item if $\omega\in C_1\cup\ldots\cup C_k$, then $x_n(\omega)\notin A$ or $y_n(\omega)\notin A$ and 
\item if $\omega\in C_{k+1}\cup\ldots\cup C_{2^{n}}$, then $x_n(\omega)\in A$ and $y_n(\omega)\in A$.
\end{itemize}
The sets $C_1,\ldots,C_k$ are clearly disjoint; hence, 
\begin{equation}
P(C_1)+\ldots+P(C_k)\le\delta.
\end{equation}
(from~(\ref{ineq})). Now we have 
\begin{equation*}
\begin{aligned}
&\hor(t)-\dol(t)=\\
&=\limsup_{n\to\infty}\frac{1}{n}\sum_{i=0}^{n-1}P(|x_i-y_i|<t)-
\liminf_{n\to\infty}\frac{1}{n}\sum_{i=0}^{n-1}P(|x_i-y_i|<t)=\\
&=
\limsup_{n\to\infty}\frac{1}{n}\sum_{i=0}^{n-1}\sum_{j=1}^{2^{n_0}}P(|x_i-y_i|<t|C_j)P(C_j)-\\
&\phantom{=}-
\liminf_{n\to\infty}\frac{1}{n}\sum_{i=0}^{n-1}\sum_{j=1}^{2^{n_0}}P(|x_i-y_i|<t|C_j)P(C_j)\le\\
&
\sum_{j=1}^{2^{n_0}}P(C_j)\left(
\limsup_{n\to\infty}\frac{1}{n}\sum_{i=0}^{n-1}P(|x_i-y_i|<t|C_j)-
\liminf_{n\to\infty}\frac{1}{n}\sum_{i=0}^{n-1}P(|x_i-y_i|<t|C_j)
\right).
\end{aligned}
\end{equation*}
However, $x_{n_0}(\omega)$ and $y_{n_0}(\omega)$ are already in the set $A$ if $\omega\in C_j, j=k+1\ldots,2^{n_0}$. Therefore, the superior limit and inferior limit are equal (by Lemma~\ref{MR2}). This follows from the fact that if $C_j$ is of the form $B_{\varphi_1\varphi_2\ldots\varphi_{n_0}}$, then 
\begin{equation}
\limsup_{n\to\infty}\frac{1}{n}\sum_{i=0}^{n-1}P(|x_i-y_i|<t|C_j)
\end{equation}
is equal to $\displaystyle F_{x'y'}^\star(t)$, where $x'=\varphi_{n_0}\circ\ldots\circ\varphi_1(x)$, and a similar argument holds for $y'$. Consequently,
\begin{equation*}
\begin{aligned}
&\hor(t)-\dol(t)\le\\
&
\sum_{j=1}^{k}P(C_j)\left(
\limsup_{n\to\infty}\frac{1}{n}\sum_{i=0}^{n-1}P(|x_i-y_i|<t|C_j)-
\liminf_{n\to\infty}\frac{1}{n}\sum_{i=0}^{n-1}P(|x_i-y_i|<t|C_j)
\right)\le\\
&\le
\sum_{j=1}^kP(C_j)\le\delta.
\end{aligned}
\end{equation*}
Since $\delta>0$ was arbitrary, $\hor(t)-\dol(t)=0$ for every $t>0$. Therefore, $\mu(f,g)=0$.
\end{proof}

\section{Examples of distributionally chaotic systems}
In this section, we will give two examples of distributionally chaotic systems and calculate their measure of chaos. In the first example, the measure is a continuous function of $p$. In the second example, the measure of chaos is constant for every $p\in(0,1)$.   

\begin{example}\label{pr1}
Consider the functions $f,g:[0,1]\to[0,1]$, where 
\begin{equation}
f(x)=
\begin{cases}
3x & \text{ if } x  \in [0,\frac{1}{3}],\\
-3x+2 & \text{ if } x \in (\frac{1}{3},\frac{2}{3}),\\
3x-2 & \text{ if } x \in [\frac{2}{3},1] 
\end{cases}
\end{equation}
and $g(x)=\frac{1}{3}x$. We will show that the measure of chaos of the system generated by these two functions is 
\begin{equation}
\mu(f,g)=
\begin{cases}
0 & \text{ if } p< \frac{1}{2},\\
\frac{6p-3}{4p-1} & \text{ if } p\ge\frac{1}{2}.
\end{cases}
\end{equation}
The case where $p<\frac{1}{2}$ simply follows from Theorem~\ref{Lips}. For the case $p\ge\frac{1}{2}$, we will prove the following propositions. 

\begin{figure}[ht]
\begin{center}
\includegraphics[height=8cm,width=8cm]{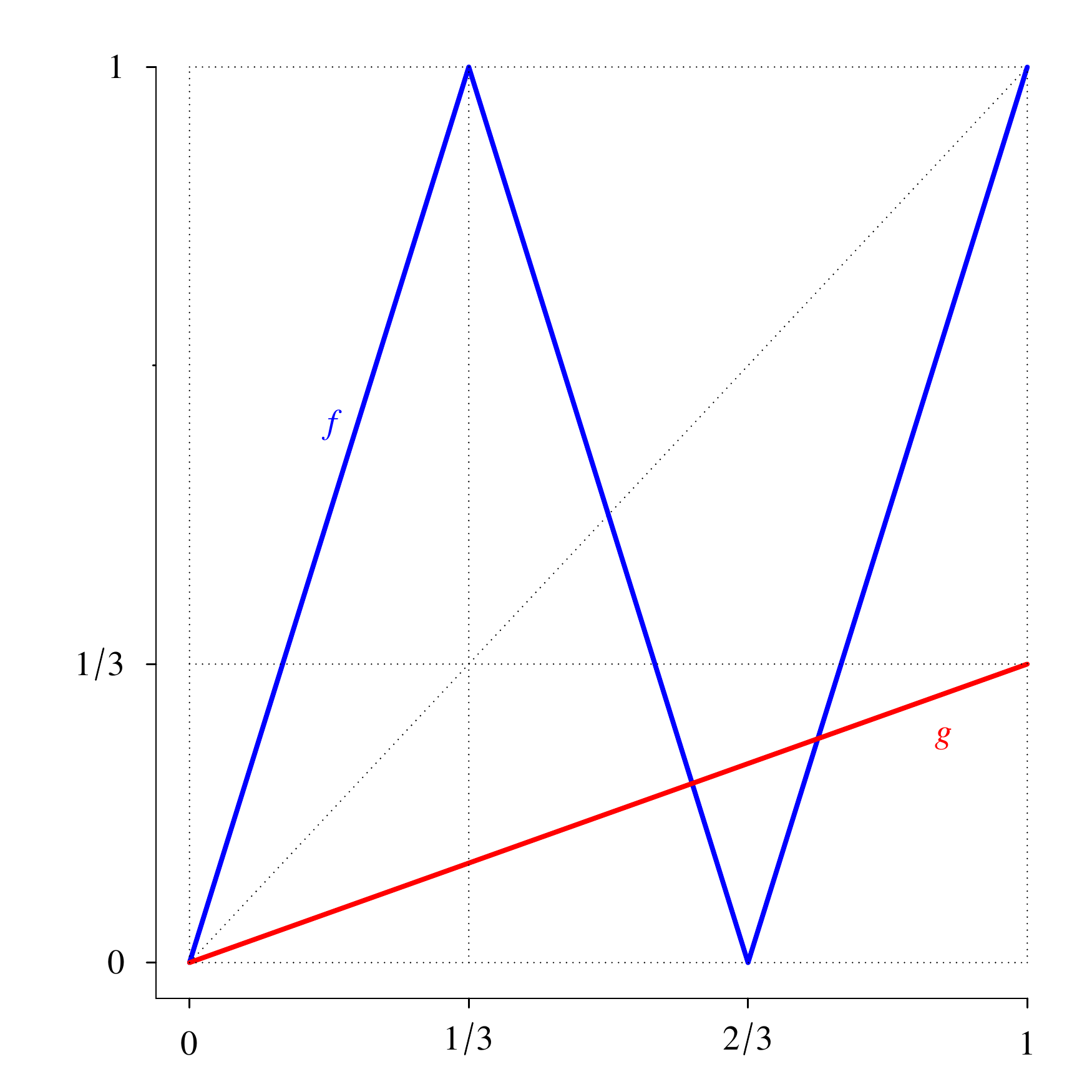}
\end{center}
\caption{The functions $f$ and $g$ in Example~\ref{pr1}}
\end{figure}

\begin{proposition}\label{dva}
If $p>\frac{1}{2}$, then for every $x,y\in [0,1]$, 
\begin{equation}
\int_0^1 \hor(t)-\dol(t)\dt \le \frac{6p-3}{4p-1}.
\end{equation}
\end{proposition}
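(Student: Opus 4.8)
The plan is to bound the two distribution functions separately: from above trivially by $\hor(t)\le 1$, and from below by the distribution of an auxiliary \emph{dominating} distance process whose long‑run behaviour I can compute explicitly. The starting observation is that $f$ is $3$-Lipschitz while $g$ contracts distances by exactly $\frac13$. Writing $d_n=|x_n-y_n|$, a step applying $g$ gives $d_{n+1}=d_n/3$, whereas a step applying $f$ gives $d_{n+1}\le 3d_n$ — the folding at the breakpoints $\frac13,\frac23$ can only shrink the distance, never expand it beyond the factor $3$. I would therefore introduce the idealized process $D_n$ with $D_0=|x-y|$ defined by $D_{n+1}=\min(1,3D_n)$ when $f$ is chosen and $D_{n+1}=D_n/3$ when $g$ is chosen, driven by the same $\omega$. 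A one-line induction using $d_n\le 1$ and the $3$-Lipschitz bound then yields $d_n\le D_n$ for every $n$ and every $\omega$, so $P(d_n<t)\ge P(D_n<t)$ and hence $\dol(t)\ge\liminf_{n\to\infty}\frac1n\sum_{i=0}^{n-1}P(D_i<t)$ for all $t$.

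The point of passing to $D_n$ is that it no longer depends on the positions $x,y$ (only on $\omega$ and, through $D_0$, on the initial distance), and its level $L_n=-\log_3 D_n$ performs a nearest-neighbour random walk on $\{0,1,2,\dots\}$: each application of $f$ moves the level down by one (distance up), each application of $g$ moves it up by one (distance down), with a reflecting boundary at $0$ created by the cap $\min(1,\cdot)$. Since $p>\frac12$, this walk has strictly negative drift in the level, is positive recurrent, and — once the cap has been hit, which happens almost surely — lives on the grid $\{3^{-k}:k\ge0\}$ with the geometric stationary distribution $\pi_k=\frac{2p-1}{p}\zlk$. Invoking the convergence of Cesàro averages for a positive recurrent chain (in the spirit of Lemma~\ref{MR1}), I get $\frac1n\sum_{i=0}^{n-1}P(D_i<t)\to\Phi(t)$, where $\Phi$ is the stationary distribution function of $D$, and therefore $\dol(t)\ge\Phi(t)$ for every $t\in(0,1)$, uniformly in the pair $x,y$.

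It then remains to assemble the estimate. Using $\int_0^1\Phi(t)\,\dt=1-E_{\mathrm{stat}}[D]$ together with the geometric series $E_{\mathrm{stat}}[D]=\sum_{k\ge0}\pi_k 3^{-k}=\frac{2p-1}{p}\sum_{k\ge0}\left(\frac{1-p}{3p}\right)^k=\frac{6p-3}{4p-1}$, I obtain $\int_0^1\dol(t)\,\dt\ge 1-\frac{6p-3}{4p-1}$. Combined with $\int_0^1\hor(t)\,\dt\le\int_0^1 1\,\dt=1$ this gives
\begin{equation*}
\int_0^1\bigl(\hor(t)-\dol(t)\bigr)\,\dt\le 1-\Bigl(1-\frac{6p-3}{4p-1}\Bigr)=\frac{6p-3}{4p-1},
\end{equation*}
as claimed.

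The step I expect to be delicate is the domination together with the boundary behaviour: I must be sure that folding of $f$ genuinely only decreases distances (this is exactly the $3$-Lipschitz estimate, so it is safe) and that the cap at distance $1$ realises precisely the reflecting boundary, so that the dominating chain is the clean geometric walk with stationary mean $\frac{6p-3}{4p-1}$ rather than something depending on the initial offset $|x-y|$. The key conceptual content is that replacing the true distance by its Lipschitz upper bound turns the otherwise hard-to-track folding into a feature that only pushes probability mass toward small distances, which is exactly what the lower bound on $\dol$ requires.
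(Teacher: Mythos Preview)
Your argument is correct and ultimately yields the very same lower bound $\dol(t)\ge\left(\frac{1-p}{p}\right)^k$ on $(3^{-k},3^{-k+1}]$ that the paper obtains, but by a genuinely different mechanism. The paper works directly in the ternary representation: it exploits the identity $f\circ g=\mathrm{id}$ to show that whenever, looking backward from time $n$, the count of $g$'s exceeds the count of $f$'s by $k$ (the event $B_n^k$), both $x_n$ and $y_n$ begin with $k$ ternary zeros, so $|x_n-y_n|\le 3^{-k}$; the probability of this backward random-walk event tends to $\left(\frac{1-p}{p}\right)^k$ by Lemma~\ref{rw}. You instead introduce the dominating distance $D_n$ based only on the Lipschitz constants, note that $-\log_3 D_n$ becomes a reflected nearest-neighbour walk on $\mathbb{N}_0$ once the cap $1$ is hit, and read off the same quantity as the tail $\sum_{j\ge k}\pi_j$ of its geometric stationary law. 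Your route is more robust --- it never uses the specific shape of $f$, only that it is $3$-Lipschitz --- and packages the random-walk input as a stationary distribution rather than a hitting probability; the paper's route, on the other hand, builds the ternary machinery that is immediately reused in Proposition~\ref{tri} to construct points attaining the bound. One small point worth tightening: your appeal to Lemma~\ref{MR1} is only ``in spirit'', since that lemma is stated for finite state spaces; what you actually need is the ergodic theorem for an irreducible, aperiodic (there is a self-loop at level $0$ because $\min(1,3\cdot1)=1$), positive recurrent chain on a countable space, together with the observation that the a.s.\ finite pre-cap transient does not affect Ces\`aro limits.
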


\begin{proposition}\label{tri} If $p>\frac{1}{2}$, then for $y\equiv 0$, there exists a sequence $\{x^{(k)}\}_{k=1}^\infty$ of points in $[0,1]$ such that 
\begin{equation}
\lim_{k\to\infty}\int_0^1 F_{x^{(k)}y}^\star(t)-F_{x^{(k)}y}(t)\dt = \frac{6p-3}{4p-1}.
\end{equation}
\end{proposition}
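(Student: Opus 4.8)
The plan is to exploit that $y\equiv 0$ is a common fixed point of both maps, since $f(0)=g(0)=0$. Hence $y_n\equiv 0$ along every $\omega\in\Omega$, so $|x_n-y_n|=x_n$ and the object to analyse collapses to
\[
F^{(n)}_{x^{(k)}y}(t)=\frac1n\sum_{i=0}^{n-1}P(x_i<t),\qquad x_0=x^{(k)},
\]
i.e. everything reduces to understanding the law of the single trajectory $x_n$. First I would pass to base-$3$ expansions: writing $x=0.d_1d_2\ldots$, the map $g(x)=x/3$ prepends the digit $0$, while $f$ deletes the leading digit and, \emph{only} when that digit equals $1$, replaces every remaining digit $d$ by $2-d$. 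The key simplifying device is to use starting points $x^{(k)}$ whose expansions contain only the digits $0$ and $2$: then the digit $1$ never occurs, no complementation is ever triggered, and the digits are merely shifted. This sidesteps the otherwise delicate complementation bookkeeping.

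Under this restriction, the number $Z_n$ of prepended zeros still present at the front of $x_n$ performs a random walk on $\{0,1,2,\ldots\}$ reflected at $0$, stepping up (a $g$-step) with probability $1-p$ and down (an $f$-step) with probability $p$. Since $p>\tfrac12$ this walk has negative drift, hence is positive recurrent with the geometric stationary law $P(Z=k)=(1-\rho)\rho^k$, where $\rho=\frac{1-p}{p}$; moreover the number $C_n$ of original digits already consumed grows linearly (at rate $2p-1$), so the reading position advances steadily through the expansion of $x^{(k)}$. I would then isolate the two regimes producing the gap: when the digits currently being read form a long run of $0$'s, $x_n$ is tiny and $P(x_n<t)\to1$ for every $t>0$; when they form a long run of $2$'s, one has $x_n\approx 3^{-Z_n}$ (since $0.222\ldots=1$), so that in the stationary regime $P(x_n<t)\to P(3^{-Z}<t)$. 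A direct computation with the geometric law gives
\[
\int_0^1\bigl(1-P(3^{-Z}<t)\bigr)\dt=E\bigl[3^{-Z}\bigr]=\sum_{k=0}^\infty 3^{-k}(1-\rho)\rho^k=\frac{3(1-\rho)}{3-\rho}=\frac{6p-3}{4p-1},
\]
which is exactly the target value.

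Accordingly I would take $x^{(k)}$ to be the point whose base-$3$ expansion concatenates alternating blocks of $0$'s and of $2$'s with block lengths tending to infinity, the index $k$ tuning the construction so that the two regime approximations are accurate to within $1/k$. Then along the times ending a long $0$-block the Cesàro average $F^{(n)}_{x^{(k)}y}(t)$ approaches $1$, while along the times ending a long $2$-block it approaches $P(3^{-Z}<t)$; consequently $F^\star_{x^{(k)}y}(t)\to 1$ and $F_{x^{(k)}y}(t)\to P(3^{-Z}<t)$ as $k\to\infty$, and by dominated convergence
\[
\int_0^1\bigl(F^\star_{x^{(k)}y}(t)-F_{x^{(k)}y}(t)\bigr)\dt\longrightarrow\frac{6p-3}{4p-1}.
\]
The upper bound of Proposition~\ref{dva} guarantees these quantities cannot exceed the target, so the constructed sequence is genuinely extremal.

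The main obstacle is the rigorous control of the Cesàro averages. Two estimates carry the argument: (i) inside a long block one must show that $P(x_n<t)$ is within $\varepsilon$ of its regime value for all but a vanishing fraction of the times spent in that block — this needs the geometric ergodicity (fast mixing) of the reflected walk $Z_n$ together with the elementary observation that, once $C_n$ lies well inside a block, there are many equal digits ahead, forcing $x_n\approx 3^{-Z_n}$ or $x_n\approx 0$; and (ii) the block lengths must be chosen so that the contribution of the current block swamps that of all earlier blocks, which is what pins the $\limsup$ to $1$ and the $\liminf$ to $P(3^{-Z}<t)$. Everything else is bookkeeping that the digits-in-$\{0,2\}$ choice keeps clean.
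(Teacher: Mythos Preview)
Your proposal is correct and follows essentially the same construction as the paper: both restrict to ternary expansions over the digits $\{0,2\}$ to suppress the complementation in $f$, and both build $x^{(k)}$ as an alternating concatenation $r_10^{M_1}2^{M_2}0^{M_3}\cdots$ with rapidly growing block lengths, using the $0$-blocks to drive the Ces\`aro average toward $1$ and the $2$-blocks to drive it toward the random-walk value. The paper isolates the two regimes in Lemmas~\ref{pat} and~\ref{sest}, evaluates the lower distribution at the finitely many thresholds $t_l^k=3^{-l}-3^{-k}$, and bounds the integral below by a Riemann-type sum exploiting monotonicity of $F_{xy}$; you instead identify the stationary law of the reflected walk $Z_n$ and evaluate the integral in one stroke as $E[3^{-Z}]=\frac{6p-3}{4p-1}$. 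These are the same computation viewed from opposite ends: the paper's backward-in-time event $B_n^k$ with $P(B_n^k)\to\rho^k$ via gambler's ruin (Lemma~\ref{rw}) is exactly your forward-in-time $P(Z\ge k)=\rho^k$ in stationarity. Your expectation identity is tidier for reaching the numerical value; the paper's discrete thresholds make the block-length bookkeeping (your obstacles (i) and (ii)) more explicit, which is where the actual work lies in either version.
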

In the next part, we will use the ternary representation of the numbers in [0,1]. We begin with some notation:
\begin{itemize}
\item $s$ will denote any infinite sequence of zeros, ones, or twos;
\item $r$ will denote any finite sequence of zeros, ones, or twos; $\ell(r)$ will denote the length of the sequence $r$;
\item for a positive integer $k$ and for $k=\infty$, $0^k$, $1^k$, and $2^k$ will denote the sequence of zeros, ones, and twos of length $k$, respectively. $0^0$, $1^0$, and $2^0$ will denote an `empty symbol' (e.g., $r0^0s=rs$); 
\item given that every number in [0,1] can be written as $0.s$ for some $s$, we omit `$0.$';
\item for a positive integer $k$, $s(k)$ will denote the $k$-th term of the sequence $s$;
\item for $x,y\in [0,1]$, we define 
\begin{equation}
U(x,y)\equiv\min\{k\in\{1,2,\ldots\},\text{ such that } s_x(k)\neq s_y(k)\}-1,
\end{equation}
where $s_x$ and $s_y$ are the ternary representations of $x$ and $y$, respectively. If there is ambiguity (e.g., $s_x=r02^\infty=r10^\infty$), $s_x$ and $s_y$ are chosen such that $U(x,y)$ is maximal.
For example, $U(\frac{1}{3},\frac{2}{9})=2$ because $s_\frac{1}{3}=02222\ldots$ and $s_\frac{2}{9}=02000\ldots$. Note that if $U(x,y)\ge k$, then $|x-y|\le 3^{-k}$.
\end{itemize}
Next, from the definition of the functions $f$ and $g$, it can be seen that
\begin{itemize}
\item $g(s)=0s$,
\item $f(0s)=f(2s)=s$,
\item $f(1s)=\overline{s}$, where $\overline{s}$ is the sequence obtained from $s$ by interchanging zeros and twos in each place,
\item $f(g(s))=s$, and
\item $f(0^\infty)=g(0^\infty)=0^\infty$.
\end{itemize}
In order to prove Proposition~\ref{dva}, we require the following lemma.
\begin{lemma}\label{rw}
Let $Z_1, Z_2, \ldots$ be independent and identically distributed random variables, where $P(Z_1=1)=p$ and $P(Z_1=-1)=1-p$ for $p\in [0,1]$. Next, consider a simple random walk $\{S_n\}_{n=0}^\infty$, where $S_0=0$ and $S_n=\sum_{i=1}^n Z_i$.
\begin{itemize}
\item If $p < \frac{1}{2}$, then $P(S_n=-k \text{ for some } n\in\{1,2,\ldots\})=1$, and
\item if $p \ge \frac{1}{2}$, then $\displaystyle P(S_n=-k \text{ for some } n\in\{1,2,\ldots\})=\left(\frac{1-p}{p}\right)^k$, 
\end{itemize}
where $k$ is an arbitrary positive integer. 
\end{lemma}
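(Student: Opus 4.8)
The plan is to reduce the whole statement to the single quantity $q\equiv P(S_n=-1\text{ for some }n\ge 1)$, the probability that the walk ever drops one unit below its start, and then to compute $q$ explicitly. First I would observe that, since the increments are $\pm 1$ and $S_0=0$, the walk cannot reach $-k$ without first passing through $-1$; and on the event that it does reach $-1$, if $\tau_1<\infty$ denotes the first hitting time of $-1$, then by the strong Markov property together with spatial homogeneity the post-$\tau_1$ trajectory is an independent copy of the original walk started at $-1$, which must drop a further $k-1$ units to reach $-k$. An induction on $k$ then gives
\begin{equation*}
P(S_n=-k\text{ for some }n\ge 1)=q^{\,k},
\end{equation*}
so the lemma reduces to showing $q=1$ when $p<\tfrac12$ and $q=\tfrac{1-p}{p}$ when $p\ge\tfrac12$ (the boundary cases $p\in\{0,1\}$ being immediate, as the walk is then monotone).

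To compute $q$ I would avoid the ambiguity of a direct first-step recursion (which produces the quadratic $p\lambda^2-\lambda+(1-p)=0$ with the two roots $1$ and $\tfrac{1-p}{p}$, leaving open which is correct) by inserting a second, upper absorbing barrier. For a positive integer $N$, let $h(i)$ be the probability of reaching $-1$ before reaching $N$ when starting from state $i\in\{-1,0,\dots,N\}$. Conditioning on the first step gives the linear difference equation $h(i)=p\,h(i+1)+(1-p)\,h(i-1)$ with boundary values $h(-1)=1$ and $h(N)=0$. For $p\neq\tfrac12$ the general solution is $h(i)=A+B\bigl(\tfrac{1-p}{p}\bigr)^{i}$, and solving for $A,B$ from the boundary conditions yields, with $\rho\equiv\tfrac{1-p}{p}$,
\begin{equation*}
h(0)=\frac{\rho\,(1-\rho^{N})}{1-\rho^{N+1}}.
\end{equation*}

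Finally I would pass to the limit $N\to\infty$. Writing $A_N$ for the event that the walk reaches $-1$ before reaching $N$, the $A_N$ increase with $N$ and their union is exactly the event that the walk ever reaches $-1$, so continuity of probability gives $q=\lim_{N\to\infty}h(0)$. Evaluating this limit, the outcome is governed by whether $\rho$ is below or above $1$: for $p>\tfrac12$ one has $\rho<1$, the powers of $\rho$ vanish, and $h(0)\to\rho=\tfrac{1-p}{p}$; for $p<\tfrac12$ one has $\rho>1$, the dominant terms in numerator and denominator cancel, and $h(0)\to 1$; the degenerate case $p=\tfrac12$ is treated separately, where the difference equation is solved by a linear function, $h(0)=1-\tfrac{1}{N+1}\to 1$, consistent with $\tfrac{1-p}{p}=1$. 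Combining with $P(\text{reach }-k)=q^{k}$ then yields $\zlk$ in the regime $p\ge\tfrac12$ and the value $1$ in the regime $p<\tfrac12$.

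The main obstacle is exactly this root/limit selection: first-step analysis alone cannot decide between $q=1$ and $q=\tfrac{1-p}{p}$ for $p>\tfrac12$, and the cleanest rigorous way to break the tie is the two-barrier computation followed by the monotone passage to the limit. A secondary point needing care is the strong Markov argument behind $P(\text{reach }-k)=q^{k}$, where one must restart the walk at the finite hitting time $\tau_1$ on the event that $-1$ is reached.
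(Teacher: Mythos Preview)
Your argument is correct and carefully handles the one subtle point (selecting the right root of the first-step quadratic). Note, however, that the paper does not give its own proof of this lemma at all: it simply cites Feller's textbook. Your route---reducing to the one-step hitting probability $q$ via the strong Markov property and spatial homogeneity, then determining $q$ by the finite two-barrier gambler's-ruin computation and a monotone passage to the limit---is exactly the classical argument found there, so in substance you have reproduced the referenced proof rather than offered an alternative.
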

A proof of this lemma can be found in \cite{Feller}.
\begin{proof}[Proof of Proposition~\ref{dva}]
Let $x,y\in [0,1]$ be arbitrary. Let $A_1^n$ denote the set 
\begin{equation}
A_1^n=\{(\omega_1,\omega_2,\ldots)\in\Omega: \omega_n=g\}.
\end{equation}
As $g(s)=0s$, we have $U(x_n(\omega),y_n(\omega))\ge 1$ for every $\omega\in A_1^n$ because, in this case, the ternary representations of $x_n=g(x_{n-1})$ and $y_n=g(y_{n-1})$ begin with zero. Next, consider the set
\begin{equation}
A_3^n=\left\{\omega\in\Omega: \sum_{i=0}^2(I_f(\omega_{n-i})-I_g(\omega_{n-i}))=-1\right\},
\end{equation}
where $I_f$ and $I_g$ are as defined in~(\ref{indikator}). If $\omega\in A_3^n$, then, from the definition of the set, there are two $g$'s and one $f$ among $\omega_{n-2}$, $\omega_{n-1}$, and $\omega_{n}$. The order of these functions is either $f,g,g$ (then $x_n=g\circ g\circ f(x_{n-3})$ begins with zero because $g(s)=0s$), or there is a $g$ followed by $f$ (which is the identity), and $x_n=\omega_n\circ\omega_{n-1}\circ\omega_{n-2}(x_{n-3})=g(x_{n-3})$ begins with zero. The same is true for $y_n$. Hence, $U(x_n(\omega),y_n(\omega))\ge 1$ for every $\omega\in A_3^n$.
Similarly, we can define the set 
\begin{equation}
A_m^n=
\left\{
\omega\in\Omega: \sum_{i=0}^{m-1}(I_f(\omega_{n-i})-I_g(\omega_{n-1}))=-1
\right\},
\end{equation}
where $m=1,2,\ldots,n$. If $m$ is even, then $A_m^n$ is clearly empty. If $m$ is odd and $\omega\in A_{m}^n$, then there are $\frac{m+1}{2}$ $g$'s and $\frac{m-1}{2}$ $f$'s among $\omega_{n-m+1}, \omega_{n-m+2},\ldots,\omega_n$. Again, there are two possibilities: 
\begin{enumerate}
\item the order of these functions is $f,f,\ldots,f,g,g,\ldots,g$, and 
\begin{equation}
x_n=g\circ g\circ\ldots\circ g\circ f\circ\ldots\circ f(x_{n-m}) = g(x_{n-1})
\end{equation}
begins with zero, or
\item there is an $f\circ g$. Without loss of generality, assume that $\omega_{n-2}=f$ and $\omega_{n-3}=g$; then 
\begin{equation}
\omega_n\circ\omega_{n-1}\circ f\circ g\circ \omega_{n-4}\circ\ldots\circ\omega_{n-m+1}=\omega_n\circ\omega_{n-1}\circ \omega_{n-4}\circ\ldots\circ\omega_{n-m+1}
\end{equation}
because $f\circ g$ is the identity. Again, the order of the remaining functions $\omega_{n-m+1},\ldots, \omega_{n-4}, \omega_{n-1},\omega_n$ is either $f,\ldots,f,g\ldots,g$, or there is a $f\circ g$, which can be `removed'. This can be repeated until we get $x_n=g(x_k)$ for some $k$, which begins with zero. 
\end{enumerate}
Therefore, if $\omega\in A_m^n$, then $U(x_n(\omega),y_n(\omega))\ge 1$.

Now, for $t>\frac{1}{3}$, we have
\begin{equation}
P(|x_n-y_n|<t)\ge P(U(x_n,y_n)\ge 1) \ge P(B_n^1),
\end{equation}
where $\displaystyle B_n^1=\bigcup_{i=1}^n A_i^n$, which can also be written as 
\begin{equation}
B_n^1=
\left\{
\omega\in\Omega: \sum_{i=0}^{m-1}(I_f(\omega_{n-i})-I_g(\omega_{n-i}))=-1 \text{ for some } m\in\{1,2,\ldots,n\}
\right\}.
\end{equation}
However, 
\begin{equation}
I_f(\omega_{n-i})-I_g(\omega_{n-i})=
\begin{cases}
1 & \text{ if } \omega_{n-i}=f \text{ (with probability } p),\\
-1 & \text{ if } \omega_{n-i}=g \text{ (with probability } 1-p).
\end{cases}
\end{equation}
Hence, the sum 
\begin{equation}
\sum_{i=0}^{m-1}(I_f(\omega_{n-i})-I_g(\omega_{n-i}))
\end{equation}
is a simple random walk. Using Lemma~\ref{rw}, we get 
\begin{equation}
\lim_{n\to\infty} P(B_n^1)=\frac{1-p}{p}
\end{equation}
because $p\ge \frac{1}{2}$. Similarly, we can construct the set 
\begin{equation}\label{Bn}
B_n^k\equiv
\left\{
\omega\in\Omega: \sum_{i=0}^{m-1}(I_g(\omega_{n-i})-I_f(\omega_{n-i}))=-k \text{ for some } m\in\{1,2,\ldots,n\}
\right\},
\end{equation}
where $k=1,2,\ldots$. Using the same arguments as above, we have
\begin{itemize}
\item $\displaystyle \lim_{n\to\infty} P(B_n^k)=\left(\frac{1-p}{p}\right)^k$,
\item if $\omega\in B_n^k$, then $U(x_n(\omega),y_n(\omega))\ge k$; hence, $|x_n(\omega)-y_n(\omega)|\le 3^{-k}$.
\end{itemize}
Now let $t\in (0,1]$ be arbitrary and let $k$ be an integer for which $t\in (3^{-k},3^{-k+1}]$. We have
\begin{equation}
\liminf_{n\to\infty} P(|x_n-y_n|<t)\ge \liminf_{n\to\infty} P(U(x_n,y_n)\ge k)\ge \liminf_{n\to\infty} P(B_n^k) = \left(\frac{1-p}{p}\right)^k. 
\end{equation}
It follows that 
\begin{equation}
F_{xy}(t)=\liminf_{n\to\infty}\frac{1}{n}\sum_{i=0}^{n-1} P(|x_i-y_i|<t) \ge \left(\frac{1-p}{p}\right)^k
\end{equation}
for $t\in (3^{-k},3^{-k+1}]$, where $k=1,2,\ldots$. Because $\displaystyle\hor(t)$ is always lower than $1$, the maximal possible area between $\dol$ and $\displaystyle\hor$ is  
\begin{equation*}
\begin{aligned}[t]
\int_0^1 \hor(t)&-\dol(t)\dt \le \int_0^1 1 - \dol(t)\dt = 1-\int_0^1 \dol(t)\dt \le \\ 
& \le 1-\sum_{k=1}^\infty (3^{-k+1}-3^{-k})\zlk = 1- 2\sum_{k=1}^\infty \left(\frac{1}{3}\right)^k\zlk = \\
& = 1-2\sum_{k=1}^\infty \left(\frac{1-p}{3p}\right)^k =1- 2\frac{\frac{1-p}{3p}}{1-\frac{1-p}{3p}}=1- \frac{2-2p}{4p-1} =\frac{6p-3}{4p-1}.
\end{aligned}
\end{equation*}
\end{proof}
\begin{figure}
\begin{center}
\includegraphics[height=8cm,width=8cm]{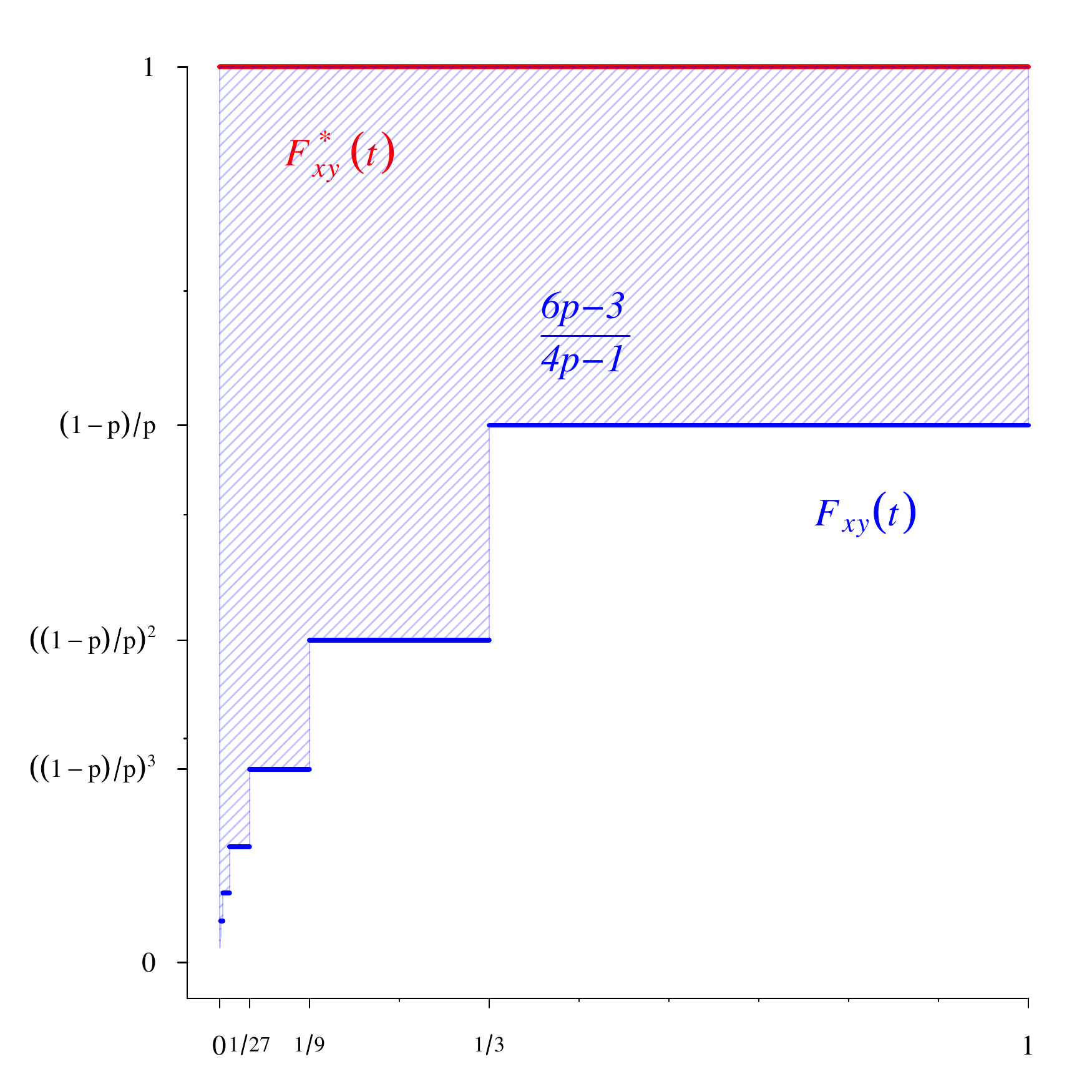}
\end{center}
\caption{Maximal possible area between $F_{xy}$ and $F_{xy}^\star$.}
\end{figure}
Before proving Proposition~\ref{tri}, we state two technical lemmas.
\begin{lemma}\label{pat}
Let $t>0$ and $\varepsilon>0$ be arbitrary. For every finite sequence $r$ that does not contain ones and every infinite sequence $s$, there exist positive integers $M$ and $N$ such that for $y=0$ and the number $w$ with the ternary representation $r0^Ms$, we have 
\begin{equation}
|F_{wy}^{(N)}(t)-1|<\varepsilon.
\end{equation}
Moreover, $N$ can be chosen to be arbitrarily large.
\end{lemma}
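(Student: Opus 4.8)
The plan is to reduce the statement to an occupation-time estimate for a drifted simple random walk. First I would exploit that $f(0)=g(0)=0$, so $y=0$ is a common fixed point and $y_i(\omega)=0$ for every $i$ and every $\omega$; hence $|x_i-y_i|=x_i$ and, by the definition of the distribution function in the random system, $F_{wy}^{(N)}(t)=\frac1N\sum_{i=0}^{N-1}P(x_i<t)$. Fixing an integer $k$ with $3^{-k}<t$, I observe that $U(x_i,0)\ge k$ forces $x_i\le 3^{-k}<t$. Since $\{x_i\ge t\}\subseteq\{U(x_i,0)<k\}$, it suffices to show that the \emph{expected} number of ``bad'' times $i<N$ at which $x_i$ has fewer than $k$ leading ternary zeros is $o(N)$.

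Next I would set up a symbolic encoding. Writing $w=r0^Ms$ as an infinite digit string $d_1d_2\dots$ (so $d_1\dots d_{\ell(r)}=r$, the next $M$ digits are $0$, then comes $s$), I introduce the integer walk $P_i=1+\#\{f\text{'s in the first } i \text{ steps}\}-\#\{g\text{'s in the first } i \text{ steps}\}$, which jumps by $+1$ when $f$ is applied (probability $p$) and by $-1$ when $g$ is applied (probability $1-p$). The key structural claim, proved by induction on $i$, is that \emph{as long as $P_j\le\ell(r)+M$ for all $j\le i$}, no flip of type $f(1s)=\overline s$ is ever triggered — the leading digit always occupies a position carrying a $0$ or a $2$ (never a $1$, since $r$ has no ones and the middle block is all zeros) — and $x_i$ is exactly the string read from position $P_i$, with positions $\le 0$ understood as prepended zeros (recall $g(s)=0s$ and $f(0s)=f(2s)=s$). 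Under this encoding the number of leading zeros of $x_i$ is at least $k$ whenever $P_i\le 1-k$ or $\ell(r)+1\le P_i\le \ell(r)+M-k+1$, so the only positions that can be bad lie in the \emph{fixed finite} lower region $R=\{2-k,\dots,\ell(r)\}$ together with a region near the top of the block.

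The decisive choice is $M=N+k$. Because $P_0=1$ and each step moves $P$ by at most $+1$, deterministically $P_i\le 1+i\le N<\ell(r)+M+1$ for every $i<N$; thus in the first $N$ steps the walk can neither reach the top region nor touch $s$, and every bad time must satisfy $P_i\in R$. Consequently the number of bad times among $\{0,\dots,N-1\}$ is bounded by the total occupation time of the finite set $R$. In the regime $p>\frac12$ relevant here the walk has positive drift and is transient, so the expected number of visits to any single level is at most $\frac{1}{2p-1}$; summing over $R$ gives $E[\#\text{bad}]\le B:=\frac{\ell(r)+k-1}{2p-1}$, a bound \emph{independent of $N$}. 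Hence $F_{wy}^{(N)}(t)\ge 1-B/N$, and any $N>B/\varepsilon$ together with $M=N+k$ yields $|F_{wy}^{(N)}(t)-1|<\varepsilon$. Since every sufficiently large $N$ works, the ``moreover'' clause is automatic.

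I expect the main obstacle to be the rigorous justification of the symbolic encoding, specifically the \emph{no-flip invariant}: a single application of $f$ to a leading digit $1$ would interchange all $0$'s and $2$'s and ruin the zero block, so one must argue carefully that the walk cannot expose a $1$ before leaving the prefix-plus-block region — which is exactly why the hypothesis that $r$ contains no ones is essential. Once the no-flip invariant and the position bookkeeping are in place, the remaining steps (the deterministic barrier from $M=N+k$ and the visit-count bound $\frac{1}{2p-1}$) are routine transience estimates for the simple random walk.
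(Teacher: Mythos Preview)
Your argument is correct and takes a route genuinely different from the paper's. The paper works with an auxiliary point $w'=r0^\infty$: since $r$ contains no ones, the trajectory of $w'$ never flips, and at the first time the forward walk $\sum_{i=1}^m(I_f(\omega_i)-I_g(\omega_i))$ reaches the level $\ell(r)$ the iterate $w'_m$ has been absorbed at the common fixed point $0^\infty$; Lemma~\ref{rw} then gives $P(C_n^{\ell(r)})\to 1$, hence $P(w'_n<3^{-k})\to 1$ and $F_{w'y}^{(N)}(3^{-k})\to 1$. Only afterwards does the paper observe that the events $\{w'_i<3^{-k}\}$, $i<N$, depend on just the first $N+k$ ternary digits of $w'$, allowing the tail to be replaced by $s$ (giving $M=N+k-\ell(r)$). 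You bypass the auxiliary $w'$ and the absorption idea entirely: choosing $M=N+k$ from the outset and using the pointer encoding, you turn the problem into a pure occupation-time estimate for the drifted walk on the fixed finite set $R=\{2-k,\dots,\ell(r)\}$, bounding $E[\#\text{bad}]$ by $(\ell(r)+k-1)/(2p-1)$. What this buys you is an explicit, non-asymptotic bound $F_{wy}^{(N)}(t)\ge 1-B/N$ with $B$ depending only on $\ell(r),k,p$; the paper's absorption argument is shorter and, incidentally, still works at $p=\tfrac12$ (where your Green-function bound $\tfrac{1}{2p-1}$ degenerates), but since the lemma is only invoked inside Propositions~\ref{dva} and~\ref{tri} under the standing hypothesis $p>\tfrac12$, this is not a gap in your proof.
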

\begin{lemma}\label{sest}
Let $\varepsilon>0$ and $t_l^k=3^{-l}-3^{-k}$, where $k$ is an arbitrary (but fixed) positive integer and $l\in\{0,1,\ldots,k-1\}$. For every finite sequence $r$ that does not contain ones and every infinite sequence $s\neq 2^\infty$ there exist positive integers $M$ and $N$ such that for $y=0$ and number $z$ with the ternary representation $r2^Ms$, we have 
\begin{equation}
\left|F_{zy}^{(N)}(t_l^k)-\left(\frac{1-p}{p}\right)^{l+1}\right|<\varepsilon
\end{equation}
for every $l\in\{0,1,\ldots,k-1\}$.
Moreover, $N$ can be chosen to be arbitrarily large.
\end{lemma}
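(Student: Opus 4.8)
The plan is to exploit that $y=0$ has ternary representation $0^\infty$, which by the listed identities $g(0^\infty)=f(0^\infty)=0^\infty$ is a common fixed point of both $f$ and $g$. Hence $y_n(\omega)\equiv 0$ for every $\omega$, so $|x_n-y_n|=x_n=z_n$ and
\[
F_{zy}^{(N)}(t)=\frac{1}{N}\sum_{i=0}^{N-1}P(z_i<t).
\]
Everything therefore reduces to understanding, for the single point $z=r2^Ms$, how often the trajectory $z_i$ falls below the threshold $t_l^k$. First I would record the ternary expansion $t_l^k=3^{-l}-3^{-k}=0.\,0^{l}2^{k-l}0^\infty$, so that the event $z_i<t_l^k$ becomes a purely digit-wise comparison.

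The heart of the argument is to track $c_i:=U(z_i,0)$, the number of leading zeros of $z_i$. Using $g(s)=0s$ (which prepends a zero), $f(0s)=f(2s)=s$ (which deletes a leading $0$ or $2$), and the cancellation $f\circ g=\mathrm{id}$, the quantity $c_i$ performs a random walk that increases by one when $g$ is applied (probability $1-p$), decreases by one when $f$ is applied to a leading zero (probability $p$), and is reflected at $0$: when $c_i=0$, applying $f$ instead consumes one symbol of the block $2^M$ and leaves $c_i=0$. As long as the reading head stays strictly inside $2^M$ -- precisely, as long as at least $k+1$ twos remain in front of the tail $s$ -- the symbols following the $c_i$ leading zeros are all twos, and a direct comparison with $0.\,0^{l}2^{k-l}0^\infty$ yields the clean equivalence
\[
z_i<t_l^k\iff c_i\ge l+1 .
\]
The hypotheses ``$r$ contains no ones'' and $s\neq 2^\infty$ keep the dynamics in this pop/prepend regime, so the complementation rule $f(1s)=\overline{s}$ is never triggered and $z$ has a genuine, unambiguous ternary expansion.

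It then remains to identify the limiting frequency of $\{c_i\ge l+1\}$. For $p>\tfrac12$ the reflected walk $c_i$ (up-probability $1-p<\tfrac12<p$) is positive recurrent with geometric stationary law $\pi_j=(1-\rho)\rho^{\,j}$, $\rho=\frac{1-p}{p}$, whence $P(c_i\ge l+1)\to\left(\frac{1-p}{p}\right)^{l+1}$; the same constant arises from Lemma~\ref{rw} as the probability that the associated simple random walk ever attains a $g$-surplus of $l+1$. Since the arithmetic means of a convergent sequence share its limit, $\frac{1}{N}\sum_{i=0}^{N-1}P(c_i\ge l+1)\to\left(\frac{1-p}{p}\right)^{l+1}$. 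I would then choose $N$ large enough that this Cesàro average lies within $\varepsilon/2$ of its limit simultaneously for the finitely many $l\in\{0,\dots,k-1\}$, and afterwards choose $M$ large (growing linearly with $N$) so that, with probability exceeding $1-\varepsilon/2$, the head consumes fewer than $M-k$ twos during the first $N$ steps. On that event the equivalence above holds for every $i<N$ and every $l$ at once, giving $\left|F_{zy}^{(N)}(t_l^k)-\left(\frac{1-p}{p}\right)^{l+1}\right|<\varepsilon$ for all $l$. Since only a lower bound on $N$ is imposed, $N$ may be taken arbitrarily large.

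\textbf{Main obstacle.} The delicate point is the regime control: proving the block-regime equivalence $z_i<t_l^k\iff c_i\ge l+1$ rigorously from the digit dynamics (handling the initial transient while $r$ is consumed, the reflection at $0$, and the boundary case $c_i=l$), and simultaneously bounding the probability that the reading head escapes the block $2^M$ before time $N$. This is exactly what forces $M$ to be chosen after, and much larger than, $N$, and it is where the conditions on $r$ and $s$ are essential to avoid the complementation rule and the dual-representation ambiguity.
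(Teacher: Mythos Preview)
Your plan is correct and follows the same path as the paper: reduce to $P(z_n<t_l^k)$ via $y_n\equiv 0$, identify the limit $\bigl(\tfrac{1-p}{p}\bigr)^{l+1}$ from the random-walk structure of the leading-zero count, and then pass to the truncated tail $r2^Ms$. The paper packages the middle step with the backward-looking sets $B_n^{l+1}$ and $C_n^{\ell(r)}$ together with Lemma~\ref{rw}, rather than your forward reflected walk and its stationary law, but these are dual descriptions of the same object. The one place the paper is tidier is the last step: it first works with $z'=r2^\infty$ and then observes that the events $\{z'_i<t_l^k\}$ for $i<N$ are determined \emph{deterministically} by the first $N+k$ ternary digits of $z'$ (at most $i<N$ original digits can be consumed, and only $k$ further digits are needed to compare with $t_l^k$). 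Hence taking $M=N+k-\ell(r)$ gives $F_{zy}^{(N)}(t_l^k)=F_{z'y}^{(N)}(t_l^k)$ exactly, so your probabilistic $\varepsilon/2$-bound on escaping the block $2^M$ is unnecessary---indeed, with $M\ge N+k$ the ``escape'' event has probability zero.
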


We can now prove Proposition~\ref{tri} (we will prove the lemmas later).
\begin{proof}[Proof of Proposition~\ref{tri}]
First, consider only $t_0^1\equiv\frac{2}{3}=3^0-3^{-1}$ and proceed as follows:
\begin{itemize}
\item choose any $r_1$ that does not contain ones or $s$;
\item for $\varepsilon_1\equiv 1$, there exist $M_1$ and $N_1$ such that for $u^1\equiv r_10^{M_1}s$, we have 
\begin{equation*}
\left|F_{u^1y}^{(N_1)}(t_0^1)-1\right|<1
\end{equation*}
(Lemma~\ref{pat});
\item for $\varepsilon_2\equiv\frac{1}{2}$, $r_2=r_10^{M_1}$, and $s$, there exist $M_2$ and $N_2>N_1$ such that for $u^2\equiv r_22^{M_2}s$, we have
\begin{equation*}
\left|F_{u^2y}^{(N_2)}(t_0^1)-\frac{1-p}{p}\right|<\frac{1}{2}
\end{equation*}
(Lemma~\ref{sest});
\item for $\varepsilon_3\equiv\frac{1}{3}$, $r_3=r_10^{M_1}2^{M_2}$, and $s$ there exist $M_3$ and $N_3>N_2$ such that for $u^3\equiv r_30^{M_3}s$, we have
\begin{equation*}
\left|F_{u^3y}^{(N_3)}(t_0^1)-1\right|<\frac{1}{3};
\end{equation*}
\item $\ldots$.
\end{itemize}
From this construction, it can be seen that for $x^{(1)}\equiv r_10^{M_1}2^{M_2}0^{M_3}2^{M_4}\ldots$, we have 
\begin{eqnarray}
F_{x^{(1)}y}^\star(t_0^1)&=&\limsup_{n\to\infty} F_{x^{(1)}y}^{(n)}(t_0^1)=1,\\
F_{x^{(1)}y}(t_0^1)&=&\liminf_{n\to\infty} F_{x^{(1)}y}^{(n)}(t_0^1)=\frac{1-p}{p}.
\end{eqnarray}\

Next, consider $t_0^2\equiv \frac{8}{9}=3^0-3^{-2}$ and $t_1^2\equiv\frac{2}{9}=3^{-1}-3^{-2}$. As in the previous case, we can construct sequences $\{M_i\}_{i=1}^\infty$ and $\{N_i\}_{i=1}^\infty$ such that for $x^{(2)}\equiv r_10^{M_1}2^{M_2}0^{M_3}2^{M_4}\ldots$, we have
\begin{equation*}
\left|F_{x^{(2)}y}^{(N_j)}(t_1^2)-1\right|<\frac{1}{j}
\end{equation*}
for even $j$, and we have
\begin{eqnarray*}
\left|F_{x^{(2)}y}^{(N_j)}(t_0^2)-\frac{1-p}{p}\right|&<&\frac{1}{j}\\
\left|F_{x^{(2)}y}^{(N_j)}(t_1^2)-\left(\frac{1-p}{p}\right)^2\right|&<&\frac{1}{j}
\end{eqnarray*}
for odd $j$. Therefore,
\begin{eqnarray*}
F_{x^{(2)}y}^\star(t_1^2)&=&1,\\
F_{x^{(2)}y}(t_0^2)&=&\frac{1-p}{p},\\
F_{x^{(2)}y}(t_1^2)&=&\left(\frac{1-p}{p}\right)^2.
\end{eqnarray*}
Given that $F_{xy}^\star$ and $\dol$ are both non-decreasing, $F_{x^{(2)}y}^\star(t)=1$ for every $t>t_1^2$, and $F_{x^{(2)}y}(t)\in \left[\left(\frac{1-p}{p}\right)^2,\frac{1-p}{p}\right]$ for $t\in [t_1^2,t_0^2]$. It follows that the area between $F_{x^{(2)}y}$ and $F_{x^{(2)}y}^\star$ is at least 
\begin{equation}
\int_0^1 F_{x^{(2)}y}^\star(t)-F_{x^{(2)}y}(t)\dt \ge (t_0^2-t_1^2)\cdot\left(1-\frac{1-p}{p}\right)
\end{equation}
The `worst case' is $F_{x^{2}y}(t)=\frac{1-p}{p}$ for $t\in (t_1^2,t_0^2]$(see Fig.~\ref{dc2}).
\begin{figure}
\begin{center}
\includegraphics[height=8cm,width=8cm]{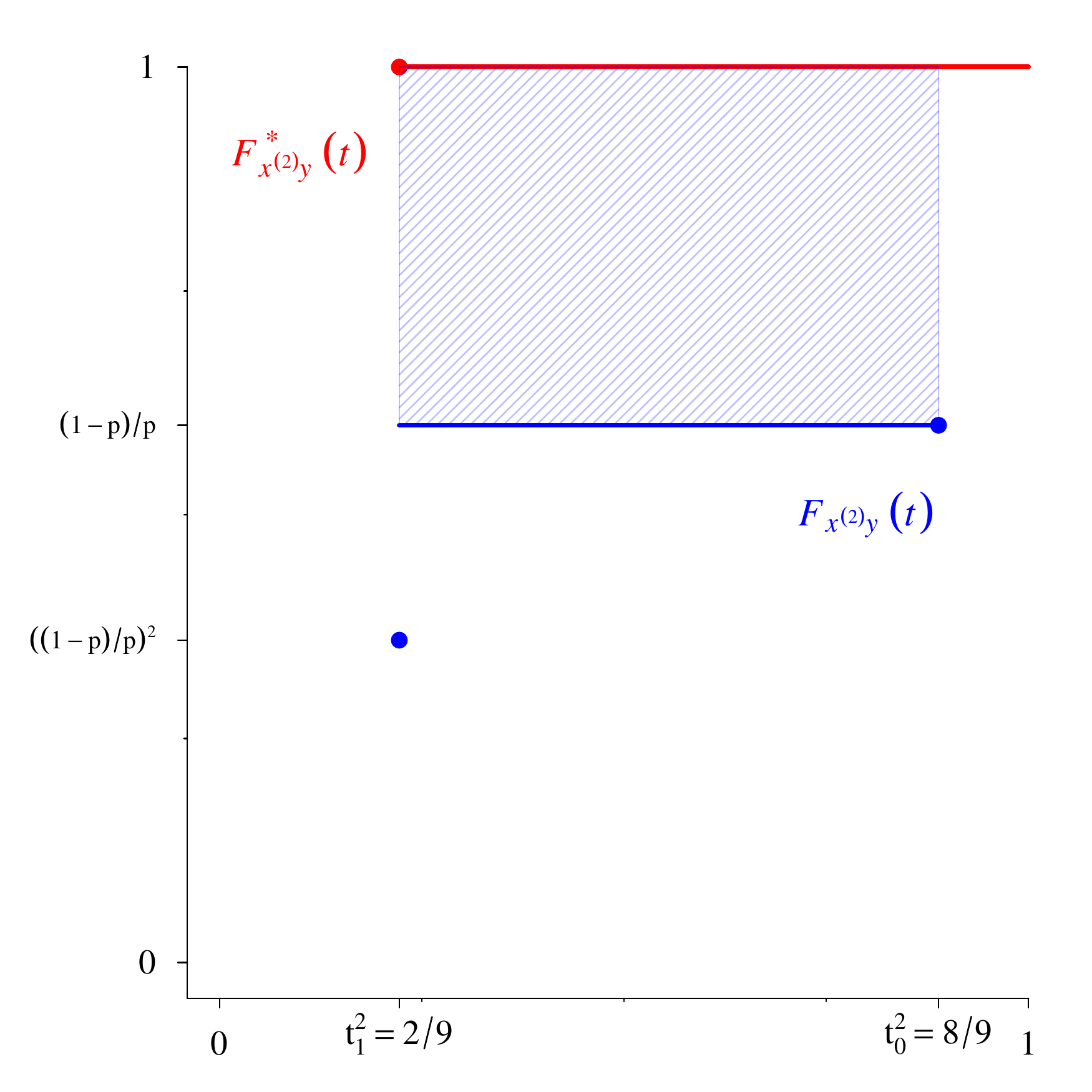}
\end{center}
\caption{Parts of the functions $F_{x^{(2)}y}$ and $F_{x^{(2)}y}^\star$ (the worst possible case).}
\label{dc2}
\end{figure}\

Similarly, for a positive integer $k$ and $t_l^k=3^{-l}-3^{-k}$, $l=0,1,\ldots,k-1$ we can construct $x^{(k)}$ such that
\begin{eqnarray*}
F_{x^{(k)}y}^\star(t_{k-1}^k)&=&1,\\
F_{x^{(k)}y}(t_l^k)&=&\left(\frac{1-p}{p}\right)^{l+1}
\end{eqnarray*}
for $l=0,1,\ldots,k-1$. As in the previous case, $F_{x^{(k)}y}^\star(t)=1$ for every $t>t_{k-1}^k$, and 
\begin{equation}
F_{x^{(k)}y}(t)\in \left[\left(\frac{1-p}{p}\right)^{i+1},\left(\frac{1-p}{p}\right)^i\right]
\end{equation}
for $t\in[t_i^k,t_{i-1}^k]$, $i=1,2,\ldots,k-1$. 
\begin{figure}
\begin{center}
\includegraphics[height=8cm,width=8cm]{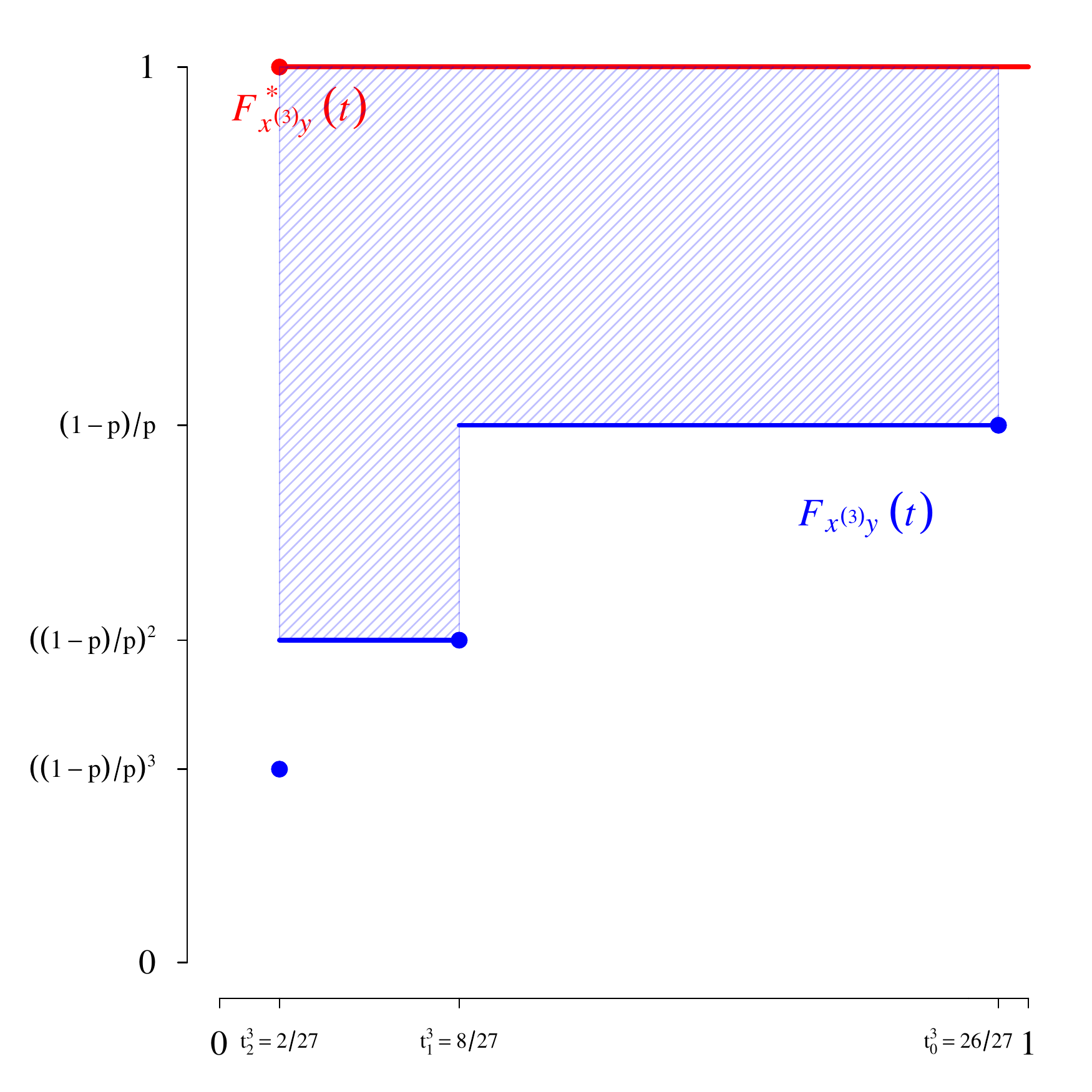}
\end{center}
\caption{Parts of the functions $F_{x^{(3)}y}$ and $F_{x^{(3)}y}^\star$ (the worst possible case).}
\end{figure}
Thus, the area between $F_{x^{(k)}y}$ and $F_{x^{(k)}y}^\star$ is at least
\begin{equation*}
\begin{aligned}
\int_0^1 & F_{x^{(k)}y}^\star(t)-F_{x^{(k)}y}(t)\dt \ge \\
&\ge \sum_{i=1}^{k-1}(t_{i-1}^k-t_i^k)\cdot\left(1-\left(\frac{1-p}{p}\right)^i\right)=
\sum_{i=1}^{k-1}(t_{i-1}^k-t_i^k) - 
\sum_{i=1}^{k-1}(t_{i-1}^k-t_i^k)\cdot \left(\frac{1-p}{p}\right)^i =\\
&=
t_0^k-t_{k-1}^k - \sum_{i=1}^{k-1}(3^{-(i-1)}-3^{-k}-(3^{-i}-3^{-k}))
\cdot \left(\frac{1-p}{p}\right)^i = \\
&=(3^0-3^{-k})-(3^{-(k-1)}-3^{-k}) - 
\sum_{i=1}^{k-1}(3^{-(i-1)}-3^{-i})\cdot \left(\frac{1-p}{p}\right)^i =\\
&= 1-3^{-(k-1)}-\sum_{i=1}^{k-1}(3^{-i+1}-3^{-i})\left(\frac{1-p}{p}\right)^i.
\end{aligned}
\end{equation*}
For $k\to\infty$, we get the same summation as in the proof of Proposition~\ref{dva}; hence,
\begin{equation}
\liminf_{k\to\infty}\int_0^1 F_{x^{(k)}y}^\star(t)-F_{x^{(k)}y}(t)\dt \ge \frac{6p-3}{4p-1}.
\end{equation}
However, from Proposition~\ref{dva}, we have 
\begin{equation}
\int_0^1 F_{x^{(k)}y}^\star(t)-F_{x^{(k)}y}(t)\dt \le \frac{6p-3}{4p-1}
\end{equation}
for every $k$; therefore,
\begin{equation}
\lim_{k\to\infty}\int_0^1 F_{x^{(k)}y}^\star(t)-F_{x^{(k)}y}(t)\dt = \frac{6p-3}{4p-1}.
\end{equation}
This concludes the proof.
\end{proof}
Now we will prove Lemmas~\ref{pat} and~\ref{sest}.
\begin{proof}[Proof of Lemma~\ref{pat}]
First, consider a number $w'$ of the form $r0^\infty$ and a set 
\begin{equation}\label{Cn}
C_n^{\ell(r)}\equiv \left\{\omega\in\Omega: \sum_{i=1}^m(I_f(\omega_i)-I_g(\omega_i))=\ell(r)\text{ for some } m\in\{1,2,\ldots n\} \right\}
\end{equation}
(recall that $\ell(r)$ denotes the length of the sequence $r$). Let $\omega\in C_n^{\ell(r)}$ and denote 
\begin{equation}
m_0\equiv\min\left\{ m\in\{1,2,\ldots\}: \sum_{i=1}^m (I_f(\omega_i)-I_g(\omega_i))=\ell(r) \right\}.
\end{equation}
Then, $w_{m_0}'(\omega)=0^\infty$ (because $f\circ g=id$ and $f$ acts as a shift) and $w_n'(\omega)=0^\infty$. Therefore, 
\begin{equation*}
P(|w_n^\prime-y_n|<3^{-k})=P(w_n'<3^{-k})\ge P(C_n^{\ell(r)}),
\end{equation*}
where $k$ is the smallest integer for which $3^{-k}<t$.
However, $\displaystyle P(C_n^{\ell(r)}) \to 1$ as $n\to\infty$ (by Lemma~\ref{rw}). Hence, $P(|w'_n-y_n|<3^{-k})\to 1$ and $F_{w'y}^{(n)}(3^{-k})\to 1$. Consequently, there exists an arbitrarily large positive integer $N$ such that 
\begin{equation}
F_{w'y}^{(N)}(3^{-k})=\frac{1}{N}\sum_{i=0}^{N-1}P(w'_i<3^{-k})>1-\varepsilon.
\end{equation}
However, the events $(w'_i<3^{-k}), i=0,1,\ldots,N-1$ are only affected by the first $N+k$ terms of the ternary representation of $w'$. Hence, the remaining terms can be replaced by the sequence $s$. It follows that for a number $w\equiv r0^{N+k-\ell(r)}s$, we have 
\begin{equation}
F_{wy}^{(N)}(t)\ge F_{wy}^{(N)}(3^{-k})>1-\varepsilon.
\end{equation}
Therefore,
\begin{equation}
|F_{wy}^{(N)}(t)-1|<\varepsilon.
\end{equation}
\end{proof}
\begin{proof}[Proof of Lemma~\ref{sest}]
First, consider a number $z'$ of the form $r2^\infty$. We will show that 
\begin{equation}\label{eq1}
\lim_{n\to\infty} P(z'_n<3^{-l}-3^{-k}) = \left(\frac{1-p}{p}\right)^{l+1}.
\end{equation}
Let $\displaystyle \omega\in B_n^{l+1}$, where $\displaystyle B_n^{l+1}$ was defined in~(\ref{Bn}). Then, $z'_n(\omega)$ begins with $l+1$ zeros. Hence, 
\begin{equation}
z'_n(\omega)\le 3^{-(l+1)} < 3^{-(l+1)}(3-3^{l+1-k})=3^{-l}-3^{-k}.
\end{equation}
Therefore, 
\begin{equation}
P(z'_n<3^{-l}-3^{-k})\ge P(B_n^{l+1})\to \left(\frac{1-p}{p}\right)^{l+1}.
\end{equation}
Now let $\displaystyle \omega\notin B_n^{l+1}$ and $\omega\in C_n^{\ell(r)}$ at the same time ($C_n^\ell(r)$ was defined in~(\ref{Cn})). Then $z'_n(\omega)$ must be from the set $\displaystyle \{2^\infty, 02^\infty,0^22^\infty,\ldots,0^l2^\infty\}$, therefore $z'_n(\omega)\ge 3^{-l}\ge 3^{-l}-3^{-k}$. It follows that 
\begin{equation}
P\left((B_n^{l+1})^C\cap C_n^{\ell(r)}\right)\le P(z'_n\ge 3^{-l}-3^{-k}). 
\end{equation}
Hence, 
\begin{equation*}
\begin{aligned}
P(z_n<3^{-l}-3^{-k})&\le P\left(\left((B_n^{l+1})^C\cap C_n^{\ell(r)}\right)^C\right)=P\left(B_n^{l+1}\cup (C_n^{\ell(r)})^C\right)\le \\
&\le P(B_n^{l+1})+P\left((C_n^{\ell(r)})^C\right)\to \left(\frac{1-p}{p}\right)^{l+1}, 
\end{aligned}
\end{equation*}
because $\displaystyle P(C_n^{\ell(r)})\to 1$; therefore, the equality in~(\ref{eq1}) holds. Consequently, 
\begin{equation}
F_{z'y}^{(n)}(3^{-l}-3^{-k})\to \left(\frac{1-p}{p}\right)^{l+1}, 
\end{equation}
so there exists arbitrarily large $N$ such that 
\begin{equation}
\left(\frac{1-p}{p}\right)^{l+1} -\varepsilon < F_{z'y}^{(N)}(3^{-l}-3^{-k})=\frac{1}{N}\sum_{i=0}^{N-1}P(z'_i<3^{-l}-3^{-k}) < \left(\frac{1-p}{p}\right)^{l+1}+\varepsilon.
\end{equation}
However, as in the proof of the previous Lemma, events $(z'_i<3^{-l}-3^{-k}), i=0,\ldots N-1$ are affected by only the first $N+k$ terms of the ternary representation of $z'$. Hence, the remaining terms can be replaced by the sequence $s$. Consequently, for $z$ of the form $r2^{N+k-\ell(r)}s$, we have 
\begin{equation}
\left|F_{zy}^{(N)}(3^{-l}-3^{-k})-\left(\frac{1-p}{p}\right)^{l+1}\right|<\varepsilon
\end{equation}
for every $l\in\{0,1,\ldots,k-1\}$.
\end{proof}
\end{example}

\begin{example}\label{priklad}
Let $f,g:[0,1]\to[0,1]$ be defined by 
\begin{equation}
f(x)=
\begin{cases}
3x & \text{ if } x\in \left[0,\frac{1}{3}\right],\\
-x+\frac{4}{3} & \text{ if } x\in\left(\frac{1}{3},\frac{2}{3}\right),\\
x & \text{ if } x\in \left[\frac{2}{3},1\right]
\end{cases}
\end{equation}
and
\begin{equation}
g(x)=
\begin{cases}
x & \text{ if } x\in \left[0,\frac{1}{3}\right],\\
-x+\frac{3}{2} & \text{ if } x\in\left(\frac{1}{3},\frac{2}{3}\right),\\
3x-2 & \text{ if } x\in \left[\frac{2}{3},1\right].
\end{cases}
\end{equation}
\begin{figure}
\begin{center}
\includegraphics[height=8cm,width=8cm]{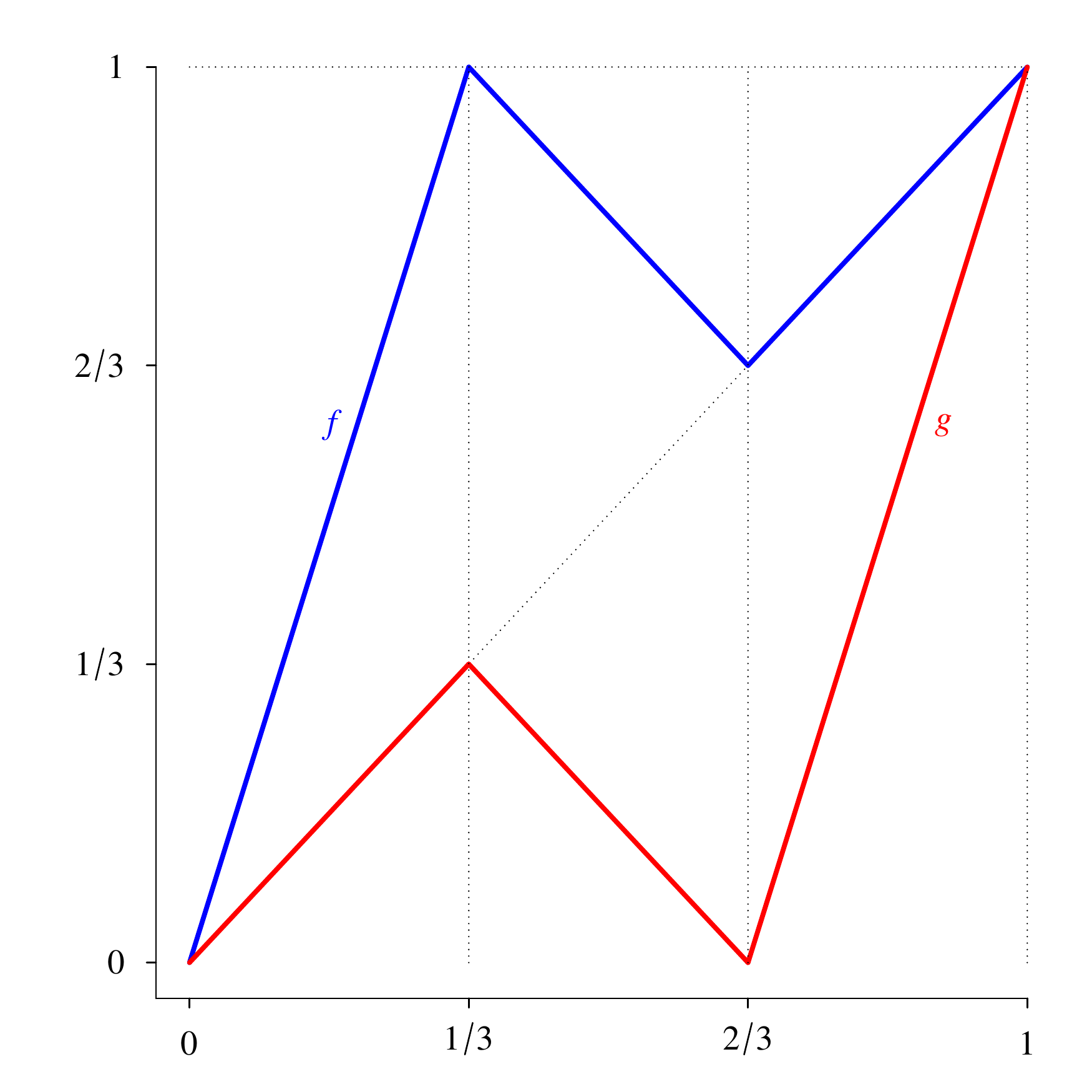}
\end{center}
\caption{The functions $f$ and $g$ in Example~\ref{priklad}}
\end{figure}
It can be seen that in the ternary representation, we have $f(0s)=s, f(1s)=2\overline{s}, f(2s)=2s$, $g(0s)=0s, g(1s)=0\overline{s},$ and $g(2s)=s$. Using similar techniques to the previous example, for any positive integer $k$, there exists a sequence $\{M_n\}_{n=1}^\infty$ of positive integers such that for $x=0^{M_1}2^{M_2}0^{M_3}2^{M_4}\ldots$ and $y=0$, we have $\dol(1-3^{-k})=0$ and $\hor(3^{-k})=1$. Consequently, $\mu(f,g)=1$ for every $p\in(0,1)$. However, both $f$ and $g$ are clearly nonchaotic (every trajectory converges to a fixed point). Therefore $\mu(f,g)=0$ for $p\in\{0,1\}$.
\end{example}

\section{Instability}
For $f,g$ in Example~\ref{priklad}, the measure $\mu(f,g,p):[0,1]\to[0,1]$ is not continuous in $p=0$ and $p=1$. The following theorem~\ref{unst} shows that for any $p\in(0,1)$ the function $\mu(f,g):C(I,I)\times C(I,I)\to [0,1]$ is also discontinuous at any point $(f,g)$. 

\begin{theorem}\label{unst}
Let $f,g:I\to I$ be continuous, and let $p$ be from the interval $(0,1)$. Then, for any $\varepsilon>0$, there exist continuous functions $\displaystyle f^\star$ and $\displaystyle g^\star$ such that $\displaystyle d(f,f^\star)<\varepsilon$, $\displaystyle d(g,g^\star)<\varepsilon$, and $\displaystyle \mu(f^\star,g^\star)=0$. The metric $d$ is given by
\begin{equation}
d(f,f^\star)\equiv \sup_{x\in I}|f(x)-f^\star(x)|.
\end{equation}
\end{theorem}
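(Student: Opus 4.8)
The plan is to replace $f$ and $g$ by nearby continuous maps that collapse every trajectory onto a common finite set, and then to invoke Theorem~\ref{Finite}. The contractivity-based results cannot be used directly: a map within $\varepsilon$ of, say, a tent map necessarily inherits its large total variation and so cannot be a contraction. The right notion is instead \emph{flatness}. I would approximate $f$ by a map $f^\star$ that is piecewise constant, taking values in a finite grid, except on finitely many arbitrarily short ``ramp'' intervals where it interpolates linearly so as to stay continuous, and likewise for $g$. Uniform continuity of $f$ and $g$ guarantees that, for a grid of mesh $\delta<\varepsilon/2$ and a sufficiently fine partition, such $f^\star,g^\star$ satisfy $d(f,f^\star)<\varepsilon$ and $d(g,g^\star)<\varepsilon$, with the total length of the ramps as small as we wish.

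Let $T_f$ and $T_g$ denote the (short) unions of ramp intervals of $f^\star$ and $g^\star$, and let $A'$ be the finite set of grid values attained on the plateaus of the two maps; off $T_f$ the map $f^\star$ takes values in $A'$, and off $T_g$ the map $g^\star$ takes values in $A'$. I would arrange the construction so that two disjointness conditions hold: first, $A'\cap(T_f\cup T_g)=\emptyset$, so that every point of $A'$ lies in a plateau of both maps and hence $f^\star(A')\subseteq A'$ and $g^\star(A')\subseteq A'$, i.e. $A'$ is invariant; second, $T_f\cap T_g=\emptyset$, so that every point of $I$ lies in a plateau of at least one of the two maps. To achieve this I would place the grid of $g^\star$ at a half-mesh offset from that of $f^\star$, which moves the natural transition locations of the two maps apart, and then position the ramps (which may be taken as short as needed) to be mutually disjoint and to avoid the finitely many points of $A'$.

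With these two properties the convergence is essentially automatic. Invariance gives that $x_n\in A'$ forces $x_{n+1}\in A'$, so $P(x_n\notin A')$ is non-increasing. Moreover, if $v\notin A'$ then, since $T_f\cap T_g=\emptyset$, the point $v$ lies in a plateau of $f^\star$ or of $g^\star$, so at least one of $f^\star(v),g^\star(v)$ lies in $A'$; consequently $P(x_{n+1}\in A'\mid x_n=v)\ge\beta$ with $\beta\equiv\min(p,1-p)>0$ for every $v\notin A'$. This yields $P(x_{n+1}\notin A')\le(1-\beta)\,P(x_n\notin A')$, hence $P(x_n\in A')\to 1$ for every starting point $x$, with $A'$ independent of $x$. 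Theorem~\ref{Finite} then gives $\mu(f^\star,g^\star)=0$, which is the assertion.

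The main obstacle is the construction rather than the probabilistic step: I must guarantee the two disjointness conditions simultaneously with $\varepsilon$-closeness, uniformly over all continuous $f,g$. This is delicate when $f$ or $g$ is steep (the windows in which a ramp may be placed while keeping $d(f,f^\star)<\varepsilon$ are then narrow) or surjective (so the ramp images cover all of $I$, which is precisely why I rely on the randomness through the factor $\beta$ instead of trying to force ramp images into plateaus), and it requires care at degenerate crossings, e.g. where $f$ is constant on a subinterval or meets a grid midpoint tangentially. The half-mesh offset together with the freedom to shrink the ramps should resolve all of these, but verifying it cleanly is where the real work lies.
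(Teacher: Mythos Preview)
Your approach is essentially the same as the paper's: approximate by continuous maps that are piecewise constant except on short ramps, arrange that a finite set $A'$ is invariant and that every point lies in a plateau of at least one of the two maps, then use the estimate $P(x_{n+1}\notin A')\le(1-\min(p,1-p))\,P(x_n\notin A')$ and invoke Theorem~\ref{Finite}. The probabilistic half of your argument matches the paper's exactly.

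Where you differ is in the construction, and the paper's choice is cleaner than your half-mesh offset. The paper partitions $I=[0,1]$ into $I_k=[k/n,(k+1)/n]$ and takes $A=\{f(k/n),g(k/n):k=0,\dots,n\}$. On each $I_k$ it makes $f^\star$ constant (equal to $f(k/n)\in A$) on a \emph{left} portion of $I_k$ long enough to contain every point of $A\cap\mathrm{int}(I_k)$, and linear on the remaining right segment; it makes $g^\star$ constant (equal to $g((k+1)/n)\in A$) on a \emph{right} portion likewise covering $A\cap\mathrm{int}(I_k)$, and linear on the left. This automatically forces the two plateaus to overlap and to contain all of $A\cap I_k$, so both disjointness conditions you sought---$A\cap(T_f\cup T_g)=\emptyset$ and $T_f\cap T_g=\emptyset$---hold by design, with no need to offset grids or worry about steepness or degenerate crossings. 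In short, the ``real work'' you anticipated disappears once you put the $f^\star$-plateau and the $g^\star$-plateau on opposite sides of each subinterval rather than trying to interleave two shifted partitions.
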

\begin{proof}
For the sake of simplicity, we will only prove this theorem for the interval $I=[0,1]$.

As $f$ and $g$ are continuous on the compact set, they are also uniformly continuous. Consequently, for any $\varepsilon$ there exists $\delta>0$ such that 
\begin{equation}
|x-y|<\delta \Rightarrow (|f(x)-f(y)|<\varepsilon \wedge |g(x)-g(y)|<\varepsilon)
\end{equation}
for every $x,y\in[0,1]$. Let $n$ be a positive integer such that $\frac{1}{n}<\delta$. The idea is to construct $\displaystyle f^\star, g^\star$ such that for the set
\begin{equation}
A\equiv\left\{f\left(\frac{0}{n}\right),f\left(\frac{1}{n}\right),\ldots, f\left(\frac{n}{n}\right),g\left(\frac{0}{n}\right),g\left(\frac{1}{n}\right),\ldots, g\left(\frac{n}{n}\right)\right\},
\end{equation}
we have
\begin{itemize}
\item $\displaystyle f^\star(A)\subseteq A$ and $\displaystyle g^\star(A)\subseteq A$,
\item $P(x_n\in A)\to 1$ for every $x\in[0,1]$ (in the random dynamical system generated by the functions $\displaystyle f^\star$ and $\displaystyle g^\star$).
\end{itemize}
By Theorem~\ref{Finite}, these conditions ensure that $\displaystyle\mu(f^\star,g^\star)=0$ because $A$ is a finite set. 

Consider an interval $I_k\equiv \left[\frac{k}{n},\frac{k+1}{n}\right], k=0,1,\ldots,n-1$. We will construct $\displaystyle f^\star$ and $\displaystyle g^\star$ on this interval in the following way. 
\begin{enumerate}
\item If $A\cap int(I_k)=\emptyset$, then we set
\begin{eqnarray*}
f^\star\left(\frac{k}{n}\right)&\equiv& f\left(\frac{k}{n}\right),\\
f^\star\left(\frac{2k+1}{2n}\right)&\equiv& f\left(\frac{k}{n}\right),\\
f^\star\left(\frac{k+1}{n}\right)&\equiv& f\left(\frac{k+1}{n}\right),\\
g^\star\left(\frac{k}{n}\right)&\equiv& g\left(\frac{k}{n}\right),\\
g^\star\left(\frac{2k+1}{2n}\right)&\equiv& g\left(\frac{k+1}{n}\right),\\
g^\star\left(\frac{k+1}{n}\right)&\equiv& g\left(\frac{k+1}{n}\right)
\end{eqnarray*}
and $\displaystyle f^\star, g^\star$ are linear between these points. 
\begin{figure}
\begin{center}
\includegraphics[height=8cm,width=8cm]{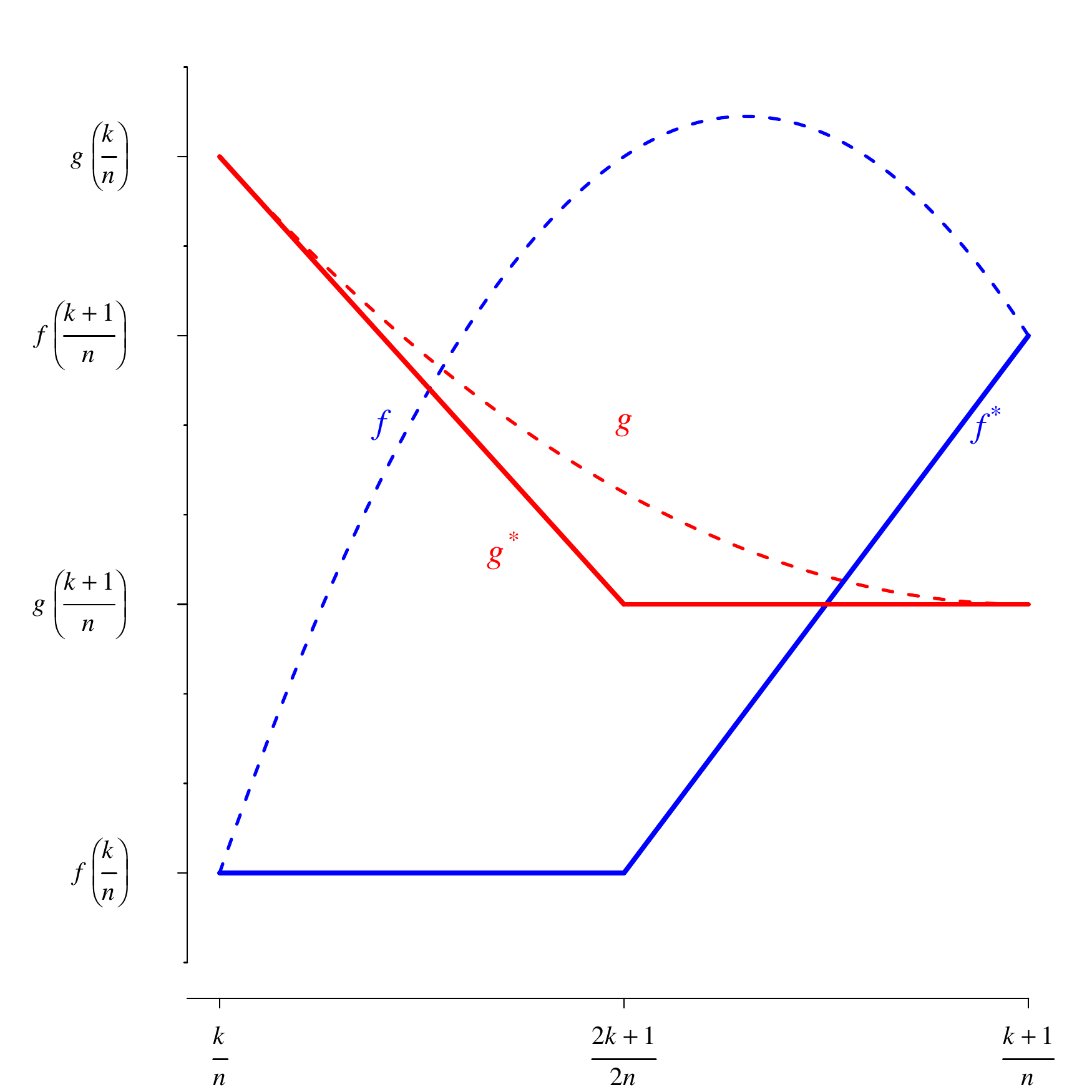}
\end{center}
\caption{The functions $\displaystyle f^\star$ and $\displaystyle g^\star$ when $A\cap int(I_k)=\emptyset$.}
\end{figure}
\item If $A\cap int(I_k)=\{a_1<a_2<\ldots<a_m\}$, then set $b_0\equiv \frac{\frac{k}{n}+a_1}{2}$ (the midpoint of $\frac{k}{n}$ and $a_1$) and $b_1\equiv\frac{a_m+\frac{k+1}{n}}{2}$ (the midpoint of $a_m$ and $\frac{k+1}{m}$). The functions $\displaystyle f^\star$ and $\displaystyle g^\star$ are given by 
\begin{eqnarray*}
f^\star\left(\frac{k}{n}\right)&\equiv& f\left(\frac{k}{n}\right),\\
f^\star\left(b_1\right)&\equiv& f\left(\frac{k}{n}\right),\\
f^\star\left(\frac{k+1}{n}\right)&\equiv& f\left(\frac{k+1}{n}\right),\\
g^\star\left(\frac{k}{n}\right)&\equiv& g\left(\frac{k}{n}\right),\\
g^\star\left(b_0\right)&\equiv& g\left(\frac{k+1}{n}\right),\\
g^\star\left(\frac{k+1}{n}\right)&\equiv& g\left(\frac{k+1}{n}\right),
\end{eqnarray*}
and $\displaystyle f^\star, g^\star$ are linear between these points. 
\begin{figure}
\begin{center}
\includegraphics[height=8cm,width=8cm]{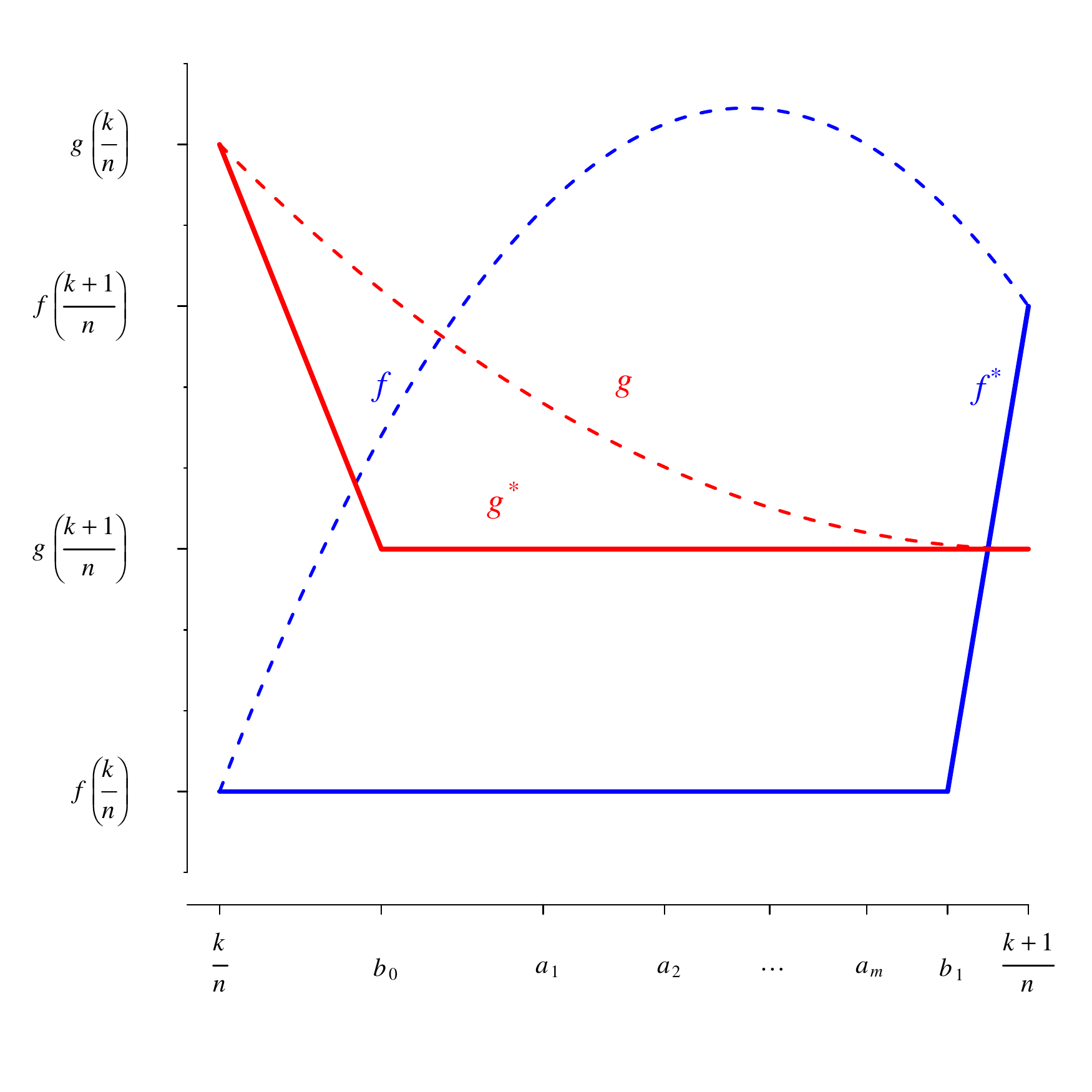}
\end{center}
\caption{The functions $\displaystyle f^\star$ and $\displaystyle g^\star$ when $A\cap int(I_k)\neq\emptyset$.}
\end{figure}
\end{enumerate}
Clearly $\displaystyle d(f,f^\star)<\varepsilon$ because if $|f(x)-f(x^\star)|\ge\varepsilon$ for some $x\in I_k$, then $|f(x)-f(\frac{k}{n})|\ge\varepsilon$ or $|f(x)-f(\frac{k+1}{n})|$ (because $\displaystyle f^\star(x)$ is between the points $f(\frac{k}{n})$ and $f(\frac{k+1}{n})$ on the interval $I_k$). However, this is not possible because $|x-\frac{k}{n}|<\frac{1}{n}<\delta$ and $|x-\frac{k+1}{n}|<\frac{1}{n}<\delta$. Similarly, $\displaystyle d(g,g^\star)<\varepsilon$. 

Next, it can be seen that $\displaystyle f^\star(A)\subseteq A$, $\displaystyle g^\star(A)\subseteq A$, and 
\begin{equation}
P(x_{n}\notin A)\le (\max(p,1-p))^n\to 0. 
\end{equation}
Therefore, $P(x_n\in A)\to 1$ for every $x\in[0,1]$. Consequently, $\displaystyle\mu(f^\star,g^\star)=0$.
\end{proof}

This theorem implies that every system (even chaotic ones) can be modified by an arbitrarily small change to a nonchaotic system. Interestingly, this does not hold in deterministic systems. If a continuous map $f$ defined on the closed interval $I$ is distributionally chaotic, then there exists $\varepsilon>0$ such that for every continuous map $\displaystyle f^\star$, we have
\begin{equation}
d(f,f^\star)<\varepsilon \Rightarrow f^\star \text{ is distributionally chaotic}\Leftrightarrow \mu(f^\star)>0. 
\end{equation}
This follows from the equivalency of distributional chaos and the positive topological entropy on the closed interval \cite[see][]{Smital1} and from the fact that the topological entropy is lower semi-continuous \cite[e.g.][]{Block}. 

This enables us to construct a random dynamical system generated by distributionally chaotic functions $\displaystyle f^\star$ and $g^\star$, which is not distributionally chaotic. Consider two distributionally chaotic functions $f$ and $g$ and $\varepsilon>0$ such that $\displaystyle d(f,f^\star)<\varepsilon$ and $\displaystyle d(g,g^\star)<\varepsilon$ imply $\displaystyle \mu(f^\star)>0$ and $\displaystyle \mu(g^\star)>0$. However, for this $\varepsilon$, there are $\displaystyle f^\star$ and $\displaystyle g^\star$ such that the random dynamical system~(\ref{system0}) is not distributionally chaotic. In this case, randomness helps us to `remove' the chaos from the system in some way. 

\section{Concluding remarks}
\begin{enumerate}
\item In Section 3, we defined distributional chaos for a random dynamical system generated by two maps. It is possible to extend this definition to systems generated by arbitrarily, and even uncountably, many maps.

\item In Section 6, we showed that two distributionally chaotic functions can generate a random dynamical system that is not distributionally chaotic. It can be shown that a nonchaotic system can even arise from the two mixing maps. Consider the functions 
\begin{center}
$
\begin{matrix}
\displaystyle
f(x)=
\begin{cases} 
4x & \text{if } x\in\left[0,\frac{1}{4}\right],  \\ 
-\frac{1}{8}x+\frac{33}{32} & \text{if } x\in\left(\frac{1}{4},\frac{1}{2}\right],\\
\frac{1}{8}x+\frac{29}{32} & \text{if } x\in\left(\frac{1}{2},\frac{3}{4}\right],\\
-4x+4  & \text{if } x\in\left(\frac{3}{4},1\right]
\end{cases}
& 
\text{and} & 
g(x)=
\begin{cases}
\frac{1}{8}x+\frac{15}{32} & \text{if } x\in\left[0,\frac{1}{4}\right],  \\ 
4x-\frac{1}{2} & \text{if } x\in\left(\frac{1}{4},\frac{3}{8}\right],\\
-4x+\frac{5}{2} & \text{if } x\in\left(\frac{3}{8},\frac{5}{8}\right],\\
4x+\frac{5}{2}  & \text{if } x\in\left(\frac{5}{8},\frac{3}{4}\right],\\
\frac{1}{8}x+\frac{13}{32}  & \text{if } x\in\left(\frac{3}{4},1\right]
\end{cases}
\end{matrix}
$
\end{center} 
and the system 
\begin{equation}
x_{n+1}=
\begin{cases}
f(x_n) & \text{with probability } \frac{1}{2},\\
g(x_n) & \text{with probability } \frac{1}{2}.
\end{cases}
\end{equation}
\begin{figure}
\begin{center}
\includegraphics[height=8cm,width=8cm]{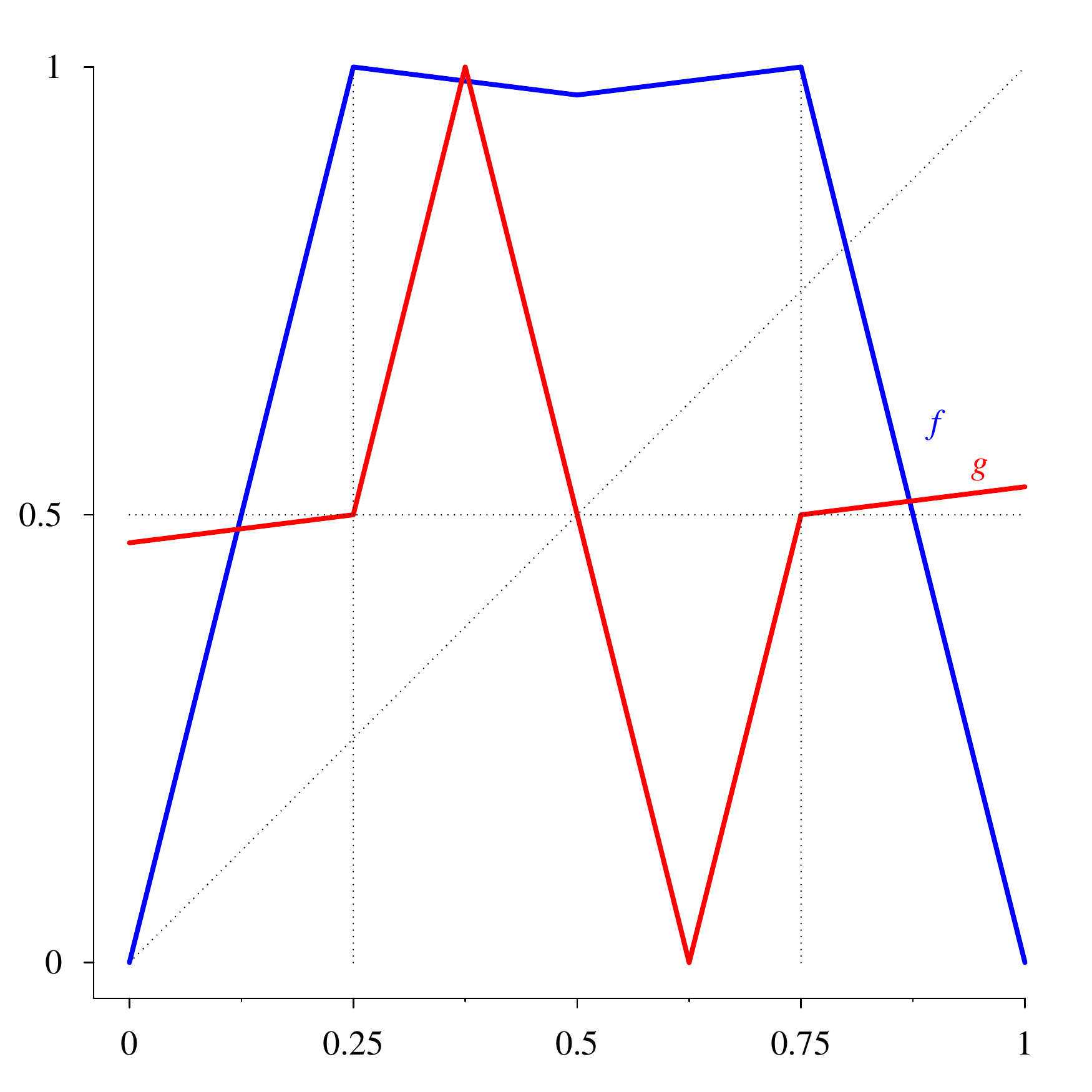}
\end{center}
\caption{The functions $f$ and $g$.}
\end{figure}
In this case, $\mu(f,g)=0$. The exact proof is quite complicated, but the concept behind it is very simple - if $x_n$ and $y_n$ are in the same quarter of the interval $[0,1]$ (which will happen infinitely many times), then either $|x_{n+1}-y_{n+1}|\le \frac{1}{8}|x_n-y_n|$ (with probability $\frac{1}{2}$) or $|x_{n+1}-y_{n+1}|\le 4|x_n-y_n|$ (with probability $\frac{1}{2}$). Therefore, the `average distance' between $x_n$ and $y_n$ will tend to $0$ almost everywhere; hence, $\mu(f,g)=0$.
\item Theorem~\ref{unst} says that the pairs of continuous functions $(f,g)$, which generate nonchaotic systems, are dense in the space $C(I,I)\times C(I,I)$. We conjecture that this is also true for the pairs that generate chaotic systems, i.e., for every pair $(f,g)$ there is a `chaotic pair' $\displaystyle(f^\star,g^\star)$, which is arbitrarily close to $(f,g)$. In some cases, it is easy to construct such functions, but we have not yet been able to find a universal algorithm.
\end{enumerate}

\section*{Funding}
This work was supported by the Slovak Scientific Grant Agency under VEGA Grant No. 2/0054/18 (both authors); Comenius University in Bratislava under Grant UK/151/2018 (the first author).

\end{document}